\begin{document}
\input xy
\xyoption{all}

\newcommand{\ind}{\operatorname{inj.dim}\nolimits}
\newcommand{\id}{\operatorname{id}\nolimits}
\newcommand{\Mod}{\operatorname{Mod}\nolimits}
\newcommand{\End}{\operatorname{End}\nolimits}
\newcommand{\Ext}{\operatorname{Ext}\nolimits}
\newcommand{\Hom}{\operatorname{Hom}\nolimits}
\newcommand{\aut}{\operatorname{Aut}\nolimits}
\newcommand{\Ker}{{\operatorname{Ker}\nolimits}}
\newcommand{\Iso}{\operatorname{Iso}\nolimits}
\newcommand{\Coker}{\operatorname{Coker}\nolimits}
\renewcommand{\dim}{\operatorname{dim}\nolimits}
\newcommand{\Cone}{{\operatorname{Cone}\nolimits}}
\renewcommand{\Im}{\operatorname{Im}\nolimits}

\newcommand{\cc}{{\mathcal C}}
\newcommand{\ce}{{\mathcal E}}
\newcommand{\cs}{{\mathcal S}}
\newcommand{\cf}{{\mathcal F}}
\newcommand{\cx}{{\mathcal X}}
\newcommand{\cy}{{\mathcal Y}}
\newcommand{\cl}{{\mathcal L}}
\newcommand{\ct}{{\mathcal T}}
\newcommand{\cu}{{\mathcal U}}
\newcommand{\cm}{{\mathcal M}}
\newcommand{\cv}{{\mathcal V}}
\newcommand{\ch}{{\mathcal H}}
\newcommand{\ca}{{\mathcal A}}
\newcommand{\mcr}{{\mathcal R}}
\newcommand{\cb}{{\mathcal B}}
\newcommand{\ci}{{\mathcal I}}
\newcommand{\cj}{{\mathcal J}}
\newcommand{\cp}{{\mathcal P}}
\newcommand{\cg}{{\mathcal G}}
\newcommand{\cw}{{\mathcal W}}
\newcommand{\co}{{\mathcal O}}
\newcommand{\cd}{{\mathcal D}}
\newcommand{\cn}{{\mathcal N}}
\newcommand{\ck}{{\mathcal K}}
\newcommand{\calr}{{\mathcal R}}
\newcommand{\ol}{\overline}
\newcommand{\ul}{\underline}
\newcommand{\cz}{{\mathcal Z}}
\newcommand{\st}{[1]}
\newcommand{\ow}{\widetilde}
\renewcommand{\P}{\mathbf{P}}
\newcommand{\pic}{\operatorname{Pic}\nolimits}
\newcommand{\Spec}{\operatorname{Spec}\nolimits}
\newtheorem{theorem}{Theorem}[section]
\newtheorem{acknowledgement}[theorem]{Acknowledgement}
\newtheorem{algorithm}[theorem]{Algorithm}
\newtheorem{axiom}[theorem]{Axiom}
\newtheorem{case}[theorem]{Case}
\newtheorem{claim}[theorem]{Claim}
\newtheorem{conclusion}[theorem]{Conclusion}
\newtheorem{condition}[theorem]{Condition}
\newtheorem{conjecture}[theorem]{Conjecture}
\newtheorem{construction}[theorem]{Construction}
\newtheorem{corollary}[theorem]{Corollary}
\newtheorem{criterion}[theorem]{Criterion}
\newtheorem{definition}[theorem]{Definition}
\newtheorem{example}[theorem]{Example}
\newtheorem{exercise}[theorem]{Exercise}
\newtheorem{lemma}[theorem]{Lemma}
\newtheorem{notation}[theorem]{Notation}
\newtheorem{problem}[theorem]{Problem}
\newtheorem{proposition}[theorem]{Proposition}
\newtheorem{remark}[theorem]{Remark}
\newtheorem{solution}[theorem]{Solution}
\newtheorem{summary}[theorem]{Summary}
\newtheorem*{thm}{Theorem}

\def \bp{{\mathbf p}}
\def \bA{{\mathbf A}}
\def \bL{{\mathbf L}}
\def \bF{{\mathbf F}}
\def \bS{{\mathbf S}}
\def \bC{{\mathbf C}}
\def \bD{{\mathbf D}}
\def \Z{{\Bbb Z}}
\def \F{{\Bbb F}}
\def \C{{\Bbb C}}
\def \N{{\Bbb N}}
\def \Q{{\Bbb Q}}
\def \G{{\Bbb G}}
\def \X{{\Bbb X}}
\def \P{{\Bbb P}}
\def \K{{\Bbb K}}
\def \E{{\Bbb E}}
\def \A{{\Bbb A}}
\def \BH{{\Bbb H}}
\def \T{{\Bbb T}}

\title[Modified Ringel-Hall algebras]{Modified Ringel-Hall Algebras, Green's Formula\\ and Derived Hall Algebras }

\thanks{Ji Lin is supported partially by the National Natural Science Foundation of China (Grant No. 11701473), the Natural Science Foundation of Fuyang Normal University (Grant No. 2018FSKJ02ZD)
 and Liangang Peng is supported partially by the National Natural Science Foundation of China (Grant No. 11521061).}
\author[Lin]{Ji Lin}

\address{Department of Mathematics, Sichuan University, Chengdu 610064, P.R.China and
Department of Mathematics and Statistics, Fuyang Normal University, Fuyang 236037, P.R.China}
\email{jlin@fync.edu.cn}
\author[Peng]{Liangang Peng}
\address{Department of Mathematics, Sichuan University, Chengdu 610064, P.R.China}
\email{penglg@scu.edu.cn}

\subjclass[2010]{18E10,18E30,16T10}
\keywords{Modified Ringel-Hall algebras, Derived Hall algebras, Hereditary abelian categories, Green's formula.}

\begin{abstract}
In this paper we define the modified Ringel-Hall algebra $\cm\ch(\ca)$ of a hereditary abelian category $\ca$ from the category $C^b(\mathcal{A})$ of bounded $\mathbb{Z}$-graded complexes. Two main results have been obtained. One is to give a new proof of  Green's formula on Ringel-Hall numbers  by using the associative multiplication of the modified Ringel-Hall algebra. The other is to show that in certain twisted cases the derived Hall algebra can be embedded in the modified Ringel-Hall algebra. As a consequence of the second result, we get that in certain twisted cases the modified Ringel-Hall algebra is isomorphic to the tensor algebra of the derived Hall algebra and the torus  of acyclic complexes and so the modified Ringel-Hall algebra is invariant under derived equivalences.
\end{abstract}

\maketitle
\section{Introduction}

The Ringel-Hall algebra of an associative algebra $A$ was designed by C. M. Ringel in \cite{R0} to realize the positive part of a complex simple Lie algebra. In fact, the situation was much better. When $A$  is hereditary,  Ringel \cite{R2} in finite type case proved a surprising result  that  the Ringel-Hall algebra of $A$ in some twisted case can realize the positive part of the corresponding quantum group. Later J. A. Green \cite{Gr} found a famous formula on Ringel-Hall numbers,  usually called Green's formula, to show the bialgebra structure of the Ringel-Hall algebra and then generalized the Ringel's result to any type case. These works have led to extensive research into Ringel-Hall algebras, and a great deal of progresses has been made, see for example \cite{R5,Rie,GP,X,SV,Kap,PX1,PX2,LP,Sch1,Sch2,Cr,T,XX,KS,XX2,Br,Gor13,Gor16}.

One of remarkable progresses is the To\"{e}n's derived Hall algebra of a locally finite dg-category in \cite{T}, which was generalized by Xiao-Xu in \cite{XX} to any triangulated category with locally homological-finite conditions. In particular, the derived Hall algebra is well-defined for  the bounded
derived category $D^b(A)$ of  any  finitely dimensional algebra $A$.

Recently, inspired by the work of T. Bridgeland in \cite{Br} and of M. Gorsky in \cite{Gor13} on constructing Ringel-Hall algebras from $\Z/2$-graded complexes, the modified Ringel-Hall algebra was defined in \cite{LuP} from $\Z/2$-graded complexes for any hereditary abelian category $\ca$ which may not have enough projective objects, and it was shown that such modified Ringel-Hall algebra is isomorphic to the corresponding Drinfeld double Ringel-Hall algebra.

In this paper, similar to \cite{LuP} we define the modified Ringel-Hall algebra from $\Z$-graded bounded complexes for any hereditary abelian category $\ca$. First of all, we find that the associative multiplication of the modified Ringel-Hall algebra implies  Green's formula. This provides a new proof of  Green's formula. Our proof is completely different from  the original one and  easier to read. Secondly we prove that in certain twisted cases the derived Hall algebra can be embedded in the modified Ringel-Hall algebra. As a consequence, we get that in certain twisted cases the modified Ringel-Hall algebra is isomorphic to the tensor algebra of the derived Hall algebra and the torus  of acyclic complexes and so the modified Ringel-Hall algebra is invariant under derived equivalences. The second result and its consequences above  are closely related to Gorsky's results in \cite{Gor16} in which he defined the semi-derived Hall algebra from any Frobenius category $\mathcal{F}$ satisfying certain finiteness conditions and obtained similar relations between his semi-derived Hall algebra and the derived Hall algebra of the stable category  $\underline{\mathcal{F}}$.

The paper is organized as follows. In Section 2 we obtain locally homological finiteness of bounded complexes and calculate the Euler forms for some special bounded complexes. In Section 3,  we define the modified Ringel-Hall algebra  of a hereditary abelian category $\ca$ from the category $C^b(\mathcal{A})$ of bounded ($\mathbb{Z}$-graded) complexes and study  its structure. In particular we find a basis of modified Ringel-Hall algebra and describe the modified Ringel-Hall algebra by the generators and relations. In Section 4, we give a new proof of Green's formula. Finally we recall the definition of the derived Hall algebra and prove our embedding theorem and its consequences in Section 5.

\section{Locally Homological Finiteness of Bounded Complexes}

Unless specified, throughout this paper $k$ denotes a finite field and $\ca$ is an essentially small hereditary abelian  $k$-category which is finitary, i.e.,
$$\dim_k\Hom_{\ca}(M,N)<\infty,\,\,\, \dim_{k}\Ext^1_{\ca}(M,N)<\infty,\,\,\, \forall M,N\in\ca.$$

Given an object $X\in\ca$, and $m\in$$\mathbb{Z}$, denote by $K_{X,m}$ the acyclic complex
$$\cdots \rightarrow0 \rightarrow X \xrightarrow{1_X} X \rightarrow 0\rightarrow\cdots,$$
where $X$ sits in the degrees $m-1$ and $m$.

\begin{lemma}\label{lemma of extension equals 0 with length 2}
For any $X\in\ca$ and $m\in\mathbb{Z}$, the Ext-dimension of $K_{X,m}$ is less than or equal to 1, i.e., $\Ext_{\cc^b(\ca)}^p(K_{X,m}, M)=0$ and $\Ext_{\cc^b(\ca)}^p(M, K_{X,m})=0$ for any $M\in\cc^b(\ca)$ and $p\geq 2$.
\end{lemma}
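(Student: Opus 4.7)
The plan is to reduce $\Ext$-computations in $\cc^b(\ca)$ to $\Ext$-computations in $\ca$ via an adjunction, then invoke the heredity of $\ca$.

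First I would compute Hom explicitly. A chain map $f\colon K_{X,m}\to M$ satisfies $f^m = d_M^{m-1}f^{m-1}$ (forced by the chain-map relation at degree $m-1$), while every other chain-map condition reduces to $d_M^2 = 0$; thus $f$ is uniquely determined by $f^{m-1}\colon X\to M^{m-1}$. This yields a natural isomorphism $\Hom_{\cc^b(\ca)}(K_{X,m}, M)\cong \Hom_\ca(X, M^{m-1})$, and a dual argument gives $\Hom_{\cc^b(\ca)}(M, K_{X,m})\cong \Hom_\ca(M^m, X)$. Equivalently, $K_{-,m}\colon\ca\to\cc^b(\ca)$ fits in a double adjunction $(-)^m\dashv K_{-,m}\dashv (-)^{m-1}$ in which both adjoint functors are exact.

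Second, I would upgrade these Hom identifications to all $\Ext$-groups. Since $(-)^{m-1}$ and $(-)^m$ are exact, $M\mapsto \Ext^p_\ca(X, M^{m-1})$ and $M\mapsto \Ext^p_\ca(M^m, X)$ form cohomological $\delta$-functors on $\cc^b(\ca)$ agreeing with $\Ext^p_{\cc^b(\ca)}(K_{X,m},-)$ and $\Ext^p_{\cc^b(\ca)}(-, K_{X,m})$ in degree $0$. By Grothendieck's universality it suffices to show each is (co)effaceable in positive degrees. The Yoneda side is automatic: given a representing $p$-extension $0\to M\to E_p\to\cdots\to K_{X,m}\to 0$, the mono $M\hookrightarrow E_p$ effaces the class (and dually the epi $E_1\twoheadrightarrow M$ handles the other variable). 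For the $\ca$-side, $p\geq 2$ is trivial by heredity; for $p=1$ in the covariant variable, given $u\in\Ext^1_\ca(X, M^{m-1})$ represented by $0\to M^{m-1}\to W\to X\to 0$, I would build an effacing mono $M\hookrightarrow N$ in $\cc^b(\ca)$ realising this embedding at degree $m-1$ by setting $N^{m-1} = W$, $N^m = M^m\sqcup_{M^{m-1}} W$ (pushout along $d_M^{m-1}$), $N^k=M^k$ otherwise, with differentials forced by the pushout universal property. Dually, in the contravariant variable I would use the pullback $N^{m-1} = M^{m-1}\times_{M^m} W$ together with $N^m = W$ to produce an effacing epi $N\twoheadrightarrow M$ whose pulled-back extension splits via the diagonal.

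These effaceability checks give the formulas
\[
\Ext^p_{\cc^b(\ca)}(K_{X,m}, M)\cong \Ext^p_\ca(X, M^{m-1}),\qquad \Ext^p_{\cc^b(\ca)}(M, K_{X,m})\cong \Ext^p_\ca(M^m, X),
\]
both of which vanish for $p\geq 2$ by the heredity of $\ca$. The main obstacle is the effacing construction on the $\ca$-side: because $\ca$ and $\cc^b(\ca)$ need not have enough injectives or projectives, the effacing monomorphism (resp.\ epimorphism) has to be built by hand, and the role of the pushout/pullback construction above is precisely to extend the enlarged component at degree $m-1$ or $m$ through the differentials without breaking $d^2 = 0$ or chain-map compatibility.
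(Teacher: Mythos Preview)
Your argument is correct, but it takes a genuinely different route from the paper's.

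The paper's proof is a direct Yoneda-Ext manipulation: given a $p$-extension $\xi$ of $K_{X,m}$ by $M$, it extracts the $(m-1)$-st component exact sequence and packages it as a $p$-extension $\xi'$ of $K_{X,m}$ by $K_{M^{m-1},m}$ together with a morphism of extensions $\xi'\to\xi$ induced by the counit map $f\colon K_{M^{m-1},m}\to M$. Since $\xi'$ is the image under the exact functor $K_{-,m}$ of a $p$-extension in $\ca$, heredity of $\ca$ forces $[\xi']=0$, and then $[\xi]=f_*[\xi']=0$. No adjunctions, no universality, just one commutative ladder.

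Your approach instead proves the stronger identification $\Ext^p_{\cc^b(\ca)}(K_{X,m},M)\cong\Ext^p_\ca(X,M^{m-1})$ (and its dual) for all $p$, by recognising the double adjunction $(-)^m\dashv K_{-,m}\dashv(-)^{m-1}$ and invoking Grothendieck's universality of effaceable $\delta$-functors. The cost is that, lacking enough injectives or projectives, you must manufacture the effacing mono/epi by hand via the pushout/pullback construction at degrees $m-1,m$; this is exactly what the paper avoids by only needing the vanishing statement rather than the full isomorphism. In return, your method yields the Euler-form computations of Proposition~\ref{proposition euler form of acyclic complexes} essentially for free, and explains conceptually \emph{why} $K_{X,m}$ behaves like an object of $\ca$ homologically: it is the image of $X$ under a functor with exact adjoints on both sides.
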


\begin{proof}
For any exact sequence
$$\xi: 0\rightarrow M\rightarrow V_{p}\rightarrow V_{p-1}\rightarrow\cdots\rightarrow V_2\rightarrow V_1\rightarrow K_{X, m}\rightarrow 0$$
in $\Ext^p_{\cc^b(\ca)}(K_{X,m}, M)$, one can easily get the following commutative diagram of the exact sequences in $\cc^b(\ca)$
\[\xymatrix{
\xi': 0\ar[r]&K_{M^{m-1},m}\ar[r]\ar[d]_f&K_{V^{m-1}_{p},m}\ar[r]\ar[d]&\cdots\ar[r]&K_{V^{m-1}_{1},m}\ar[r]\ar[d]&K_{X, m}\ar[r]\ar@{=}[d]&0\\
\xi:\, 0\ar[r]&M\ar[r]&V_{p}\ar[r]&\cdots\ar[r]&V_{1}\ar[r]&K_{X, m\ar[r]}&0,
}\]
where the exact sequence $\xi'$ is naturally determined by the $(m-1)$-th component exact sequence of $\xi$. Since $\ca$ hereditary and $p\geq 2$, it is
clear that $[\xi']=0$, where $[\xi']$ is the equivalent class of the exact sequence $\xi'$. One can easily see that $\Ext^p_{\cc^b(\ca)}(K_{X, m}, f)([\xi'])=[\xi]$ and so $[\xi]=0$. This shows that $\Ext^p_{\cc^b(\ca)}(K_{X,m}, M)=0$. By duality one can prove that $\Ext_{\cc^b(\ca)}^p(M, K_{X,m})=0$ for any $M\in\cc^b(\ca)$ and $p\geq 2$.
\end{proof}

\begin{proposition}\label{proposition extension 2 zero}
For any $M,K\in\cc^b(\ca)$ with $K$  acyclic, we have
$$\Ext^{p}_{\cc^b(\ca)}(K, M)=0,\quad \Ext^{p}_{\cc^b(\ca)}(M,K)=0, \mbox{ for any } p\geq2.$$
\end{proposition}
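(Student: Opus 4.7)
The plan is to reduce to Lemma~\ref{lemma of extension equals 0 with length 2} by peeling off acyclic two-term pieces $K_{X,m}$ one at a time. Specifically, I would induct on the \emph{span} of $K$, that is, the length of the smallest integer interval containing the support of $K$.

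The base case is when the span is at most $1$. A bounded acyclic complex with at most one nonzero term must be zero, and a bounded acyclic complex concentrated in two adjacent degrees $m-1, m$ has $d^{m-1}$ an isomorphism, hence is isomorphic to $K_{K^m,m}$. In both cases, the desired vanishing follows directly from Lemma~\ref{lemma of extension equals 0 with length 2}.

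For the inductive step, let $a$ be the lowest nonzero degree of $K$ and set $X = K^a$. Since $K$ is acyclic, $d^a\colon X \to K^{a+1}$ is a monomorphism, so there is a monomorphism of complexes $K_{X,a+1} \hookrightarrow K$ given by the identity in degree $a$ and by $d^a$ in degree $a+1$. Its cokernel $Q$ is supported in degrees $\geq a+1$ and is again acyclic, so it has strictly smaller span than $K$. Applying $\Hom_{\cc^b(\ca)}(-,M)$ to the short exact sequence
$$0 \to K_{X,a+1} \to K \to Q \to 0$$
yields a long exact sequence. By Lemma~\ref{lemma of extension equals 0 with length 2} the $\Ext^p$-terms involving $K_{X,a+1}$ vanish for $p \geq 2$, while by the induction hypothesis the terms involving $Q$ vanish for $p \geq 2$; a short diagram chase on the long exact sequence then forces $\Ext^p_{\cc^b(\ca)}(K,M) = 0$ for $p \geq 2$.

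The vanishing of $\Ext^p_{\cc^b(\ca)}(M,K)$ follows by the symmetric argument: peel off a copy of $K_{Y,b}$ at the top using $b$ the highest nonzero degree of $K$ and $Y = K^b$, and run the same dimension shift with $\Hom_{\cc^b(\ca)}(M,-)$ in place of $\Hom_{\cc^b(\ca)}(-,M)$. The only delicate bookkeeping is checking that $Q$ is acyclic and of smaller span, which is routine from the definitions; no step should present a serious obstacle.
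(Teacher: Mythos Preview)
Your proof is correct and follows essentially the same approach as the paper: the paper's one-line argument is that $K$ is an iterated extension of complexes of the form $K_{X,m}$, and you have simply made this filtration explicit via induction on the span. One small simplification: you do not need a separate ``peel from the top'' argument for $\Ext^p_{\cc^b(\ca)}(M,K)$, since applying $\Hom_{\cc^b(\ca)}(M,-)$ to the very same short exact sequence $0 \to K_{X,a+1} \to K \to Q \to 0$ already gives the second vanishing by the same long exact sequence reasoning.
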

\begin{proof}
Note that the bounded acyclic complex $K$ can be obtained by finitely repeated extensions of some special acyclic complexes of form $K_{X,m}$. So from the above lemma we get the results as required.
\end{proof}

For any object $A\in\ca$, let $U_{A,m}$ be the stalk complex with $A$ concentrated in the degree $m$.
\begin{proposition}\label{proposition homological finiteness of stalk complex}
For any $A,B\in\ca$ and $m>n\in \mathbb{Z}$, we have

$(1) \Ext^p_{\cc^b(\ca)}(U_{A,m},U_{B,m})=0$ for any $p\geq 2;$

$(2) \Ext^p_{\cc^b(\ca)}(U_{A,m},U_{B,n})=0$ for any $p\geq 0;$

$(3)$ $\Ext^p_{\cc^b(\ca)}(U_{B,n},U_{A,m})\cong\begin{cases}\Hom_{\ca}(B,A)&p=m-n,\\\Ext^1_{\ca}(B,A)&p=m-n+1,\\0&otherwise.\end{cases}$
\end{proposition}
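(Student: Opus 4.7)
The plan is to prove the three parts in the order $(2)$, $(1)$, $(3)$, with the main tool being dimension-shifting via the short exact sequences
\[
0 \to U_{A, m+1} \to K_{A, m+1} \to U_{A, m} \to 0
\quad\text{and}\quad
0 \to U_{A, m} \to K_{A, m} \to U_{A, m-1} \to 0
\]
in $\cc^b(\ca)$, combined with Lemma \ref{lemma of extension equals 0 with length 2}, which forces $\Ext^p_{\cc^b(\ca)}(K_{X, k}, -)$ and $\Ext^p_{\cc^b(\ca)}(-, K_{X, k})$ to vanish for $p \geq 2$.

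For $(2)$, the cases $p = 0, 1$ are handled by direct componentwise inspection: a chain map between stalks in different degrees is zero, and for any single extension $0 \to U_{B, n} \to V \to U_{A, m} \to 0$ with $m > n$ the chain-map condition on the inclusion $U_{B, n} \hookrightarrow V$ forces every differential of $V$ to vanish, so $V$ splits. For $p \geq 2$ I apply $\Hom_{\cc^b(\ca)}(-, U_{B, n})$ to the first SES above; Lemma \ref{lemma of extension equals 0 with length 2} kills $\Ext^p(K_{A, m+1}, U_{B, n})$, so $\Ext^p(U_{A, m}, U_{B, n})$ is a quotient of $\Ext^{p-1}(U_{A, m+1}, U_{B, n})$, and induction on $p$ finishes since $m + 1 > n$ is preserved. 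Part $(1)$ is then immediate from the same shift: $\Ext^p(U_{A, m}, U_{B, m})$ is a quotient of $\Ext^{p-1}(U_{A, m+1}, U_{B, m})$, which vanishes by $(2)$ for $p \geq 2$.

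For $(3)$, I first settle the low-degree pieces. The case $p = 0$ is immediate. For $p = 1$, a componentwise analysis of $0 \to U_{A, m} \to V \to U_{B, n} \to 0$ shows that a nontrivial differential of $V$ can only occur at $d_V^n$ and only when $m = n + 1$, in which case it can be an arbitrary map $B \to A$; this gives $\Ext^1(U_{B, n}, U_{A, n+1}) \cong \Hom_{\ca}(B, A)$ and $\Ext^1(U_{B, n}, U_{A, m}) = 0$ for $m > n + 1$. A parallel direct check, in which the chain-map condition on $K_{A, m} \hookrightarrow V$ forces $d_V^{m-1}$ to be a retraction of the inclusion $A \hookrightarrow V^{m-1}$ (and hence splits the degree-$(m-1)$ extension), gives $\Ext^1(U_{B, n}, K_{A, m}) = 0$ for every $m > n$. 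Applying $\Hom_{\cc^b(\ca)}(U_{B, n}, -)$ to the second SES and combining these two vanishings with Lemma \ref{lemma of extension equals 0 with length 2} yields, for all $p \geq 2$ and $m > n$, the isomorphism
\[
\Ext^p_{\cc^b(\ca)}(U_{B, n}, U_{A, m}) \;\cong\; \Ext^{p-1}_{\cc^b(\ca)}(U_{B, n}, U_{A, m-1}).
\]
Iterating this shift $\min(p - 1,\, m - n)$ times lands on one of three endpoints: $\Ext^1(U_{B, n}, U_{A, m-p+1})$ when $p \leq m - n$, which by the direct $p=1$ computation equals $\Hom_{\ca}(B, A)$ exactly when $p = m - n$ and is zero otherwise; $\Ext^1(U_{B, n}, U_{A, n}) = \Ext^1_{\ca}(B, A)$ when $p = m - n + 1$, since same-degree stalk extensions in $\cc^b(\ca)$ are themselves stalk complexes and so correspond to $\ca$-extensions; or $\Ext^{p-m+n}(U_{B, n}, U_{A, n})$ with $p - m + n \geq 2$ when $p > m - n + 1$, which vanishes by part $(1)$.

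The main technical delicacy is the vanishing $\Ext^1_{\cc^b(\ca)}(U_{B, n}, K_{A, m}) = 0$ for $m > n$; it requires a componentwise case split on whether $n < m - 1$ or $n = m - 1$, and without it the dimension shift at $p = 2$ degrades to a mere surjection, so the precise dimensions claimed in $(3)$ could not be read off.
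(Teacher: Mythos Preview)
Your proof is correct and follows essentially the same dimension-shifting approach as the paper, using the short exact sequences $0 \to U_{A,m} \to K_{A,m} \to U_{A,m-1} \to 0$ together with Lemma~\ref{lemma of extension equals 0 with length 2}. The only minor differences are cosmetic: the paper proves (1) directly via $\Ext^p_{\cc^b(\ca)}(U_{A,m},U_{B,m})\cong \Ext^p_{\ca}(A,B)$ and then deduces (2), whereas you reverse the order; and for (3) the paper additionally notes $\Hom_{\cc^b(\ca)}(U_{B,n},K_{A,m})=0$ so that the shift isomorphism already holds for $p\geq 1$, avoiding your separate hands-on computation of $\Ext^1(U_{B,n},U_{A,m})$.
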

\begin{proof}
(1) It is not hard to see that $\Ext^p_{\cc^b(\ca)}(U_{A,m},U_{B,m})$ is isomorphic to $\Ext^p_{\ca}(A,B)$ for any $p\geq 0$. So from  $\ca$ hereditary we have $ \Ext^p_{\cc^b(\ca)}(U_{A,m},U_{B,m})=0$ for any $p\geq 2;$

(2) Clearly one has the short exact sequence $0\rightarrow U_{A,m}\rightarrow K_{A,m}\rightarrow U_{A,m-1}\rightarrow0$. For any $p\geq 2$, using $\Hom_{\cc^b(\ca)}(-, U_{B,n})$ acting on this short exact sequence, and by Lemma \ref{lemma of extension equals 0 with length 2} we have an isomorphism between $\Ext^p_{\cc^b(\ca)}(U_{A,m},U_{B,n})$ and $\Ext^{p+1}_{\cc^b(\ca)}(U_{A,m-1},U_{B,n})$ and so $\Ext^p_{\cc^b(\ca)}(U_{A,m},U_{B,n})$ $\cong\Ext^{p+m-n}_{\cc^b(\ca)}(U_{A,n},U_{B,n})=0$. In addition it is easy to see that $\Ext^1_{\cc^b(\ca)}(U_{A,m},U_{B,n})=0$ and $\Hom_{\cc^b(\ca)}(U_{A,m},U_{B,n})=0$.

(3) It is not hard to see that $\Hom_{\cc^b(\ca)}(U_{B,n},K_{A,m})=0$ and $\Ext^1_{\cc^b(\ca)}(U_{B,n},K_{A,m})=0$. Using $\Hom_{\cc^b(\ca)}(U_{B,n}, -)$ acting on the short exact sequence $0\rightarrow U_{A,m}\rightarrow K_{A,m}\rightarrow U_{A,m-1}\rightarrow0$, and by Lemma \ref{lemma of extension equals 0 with length 2} we have  an isomorphism between $\Ext^p_{\cc^b(\ca)}(U_{B,n},U_{A,m})$ and $\Ext^{p-1}_{\cc^b(\ca)}(U_{B,n},U_{A,m-1})$ for any $p\geq 1$.  From this isomorphism one can easily deduce the results as required.
\end{proof}

The set of the isomorphism classes of $\ca$ is denoted by $\Iso(\ca)$, and $\widehat{A}$ denotes the corresponding element in the Grothendieck group $K_0(\ca)$ for any object $A\in\ca$.

Since $\ca$ is hereditary, we have the multiplicative Euler form as usual
$$\langle -, -\rangle: K_0(\ca)\times K_0(\ca)\rightarrow\mathbb{Q}^{\times}$$
determined by
$$\langle \widehat{A}, \widehat{B}\rangle=\frac{|\Hom_{\ca}(A, B)|}{|\Ext^1_{\ca}(A, B)|}, \,\,\, \forall A, B\in\mathcal{A},$$
which is a bilinear form on the Grothendieck group $K_0(\ca)$.

From Proposition \ref{proposition homological finiteness of stalk complex} one can easily see that the multiplicative Euler form for $\cc^b(\ca)$ is also well-defined, i.e.,
$$\langle -,-\rangle : K_0(\cc^b(\ca))\times K_0(\cc^b(\ca))\rightarrow \mathbb{Q}^\times,$$
determined by
 $$\langle [M], [N]\rangle =\prod_{p=0}^{+\infty} |\Ext^p_{\cc^b(\ca)}(M,N)|^{(-1)^p},\,\,\, \forall  [M], [N]\in\Iso(\cc^b(\ca)),$$
which is a bilinear form on the Grothendieck group $K_0(\cc^b(\ca))$.

\begin{remark}In the above it would not cause confusion that we use the same symbol $\langle -,-\rangle $ to denote two Euler forms, since for any $A, B\in \ca$
we have $\langle \widehat{A}, \widehat{B}\rangle=\langle [U_{A,n}], [U_{B,n}]\rangle$ for any $n\in\mathbb{Z}$.
\end{remark}
In the following, $\cc^b_{ac}(\ca)$ denotes the category of bounded acyclic complexes over $\ca$.

For any $K\in\cc^b_{ac}(\ca)$, we know that $K$ has the Ext-dimension less than or equal to 1 and so we have
$$\langle [M], [K]\rangle=\frac{|\Hom_{\cc^b(\ca)}(M, K)|}{|\Ext^1_{\cc^b(\ca)}(M, K)|}$$ and
$$\langle [K], [M]\rangle=\frac{|\Hom_{\cc^b(\ca)}(K, M)|}{|\Ext^1_{\cc^b(\ca)}(K, M)|}.$$

The following result has been given by Gorsky in \cite{Gor13} without proofs. For reader's convenience we give some proofs.
\begin{proposition}\label{proposition euler form of acyclic complexes}
For any $A,B\in\ca$ and $m,n\in \mathbb{Z}$, we have $\langle [K_{A,m}],[U_{B,n}]\rangle=\langle \widehat{A}, \widehat{B}\rangle^{\delta^n_{m-1}}$, $\langle [U_{B,n}], [K_{A,m}]\rangle=\langle \widehat{B}, \widehat{A}\rangle^{\delta^{n}_m}$ and $\langle [K_{A,m}],[K_{B,n}]\rangle=\langle \widehat{A}, \widehat{B}\rangle^{(\delta^n_m+\delta^{n}_{m-1})}$.
\end{proposition}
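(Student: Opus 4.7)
The plan is to use the short exact sequence $0 \to U_{A,m} \to K_{A,m} \to U_{A,m-1} \to 0$ in $\cc^b(\ca)$ (and its analogue in the second variable) together with the multiplicativity of the Euler form on short exact sequences to reduce every identity to Euler forms between stalk complexes, which are then read off from Proposition \ref{proposition homological finiteness of stalk complex}.

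First I would establish the auxiliary formula
$$\langle [U_{A,p}], [U_{B,q}]\rangle = \begin{cases} 1 & p > q,\\ \langle \widehat{A}, \widehat{B}\rangle^{(-1)^{q-p}} & p \leq q, \end{cases}$$
for all $A,B\in\ca$ and $p,q\in\Z$. When $p>q$ every $\Ext^r$ vanishes by part (2) of Proposition \ref{proposition homological finiteness of stalk complex}; when $p=q$ only $\Hom$ and $\Ext^1$ contribute, giving $\langle\widehat{A},\widehat{B}\rangle$ by part (1); when $p<q$ part (3) (with roles swapped) pins down the only two nonzero $\Ext^r$ in degrees $q-p$ and $q-p+1$, and taking the alternating product produces exactly the signed power of $\langle\widehat{A},\widehat{B}\rangle$ above.

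Next, applying the bilinearity of the Euler form to the first short exact sequence gives
$$\langle [K_{A,m}], [U_{B,n}]\rangle = \langle [U_{A,m}], [U_{B,n}]\rangle \cdot \langle [U_{A,m-1}], [U_{B,n}]\rangle.$$
A short case analysis on the position of $n$ relative to $\{m-1,m\}$ using the auxiliary formula yields the first identity: if $n=m-1$ the first factor is trivial and the second equals $\langle\widehat{A},\widehat{B}\rangle$; if $n=m$ or $n>m$ the two signed powers have opposite parities and cancel to $1$; if $n<m-1$ both factors are trivial. The second identity is obtained by the mirror argument, using the same short exact sequence in the first argument of $\langle -,-\rangle$.

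Finally, I apply the short exact sequence $0\to U_{B,n}\to K_{B,n}\to U_{B,n-1}\to 0$ in the second argument and reuse the first identity twice to get
$$\langle [K_{A,m}], [K_{B,n}]\rangle = \langle \widehat{A},\widehat{B}\rangle^{\delta^n_{m-1}}\cdot \langle \widehat{A},\widehat{B}\rangle^{\delta^{n-1}_{m-1}} = \langle \widehat{A},\widehat{B}\rangle^{\delta^n_{m-1}+\delta^n_m},$$
where the last equality uses $\delta^{n-1}_{m-1}=\delta^{n}_{m}$. The main obstacle here is purely bookkeeping: one must keep track of signs and Kronecker indices across the four cases in the reduction, and check that the auxiliary formula does yield the expected cancellations when $n$ is not adjacent to $m$. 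There is no conceptual difficulty beyond carefully assembling Proposition \ref{proposition homological finiteness of stalk complex} with the additivity of $\langle -,-\rangle$.
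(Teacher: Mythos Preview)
Your argument is correct, but it differs from the paper's approach. The paper computes $\Ext^p_{\cc^b(\ca)}(K_{A,m},U_{B,n})$ directly for each $p$: it observes that when $n\neq m-1$ all these groups vanish, and when $n=m-1$ one has $\Hom_{\cc^b(\ca)}(K_{A,m},U_{B,n})\cong\Hom_\ca(A,B)$ and $\Ext^1_{\cc^b(\ca)}(K_{A,m},U_{B,n})\cong\Ext^1_\ca(A,B)$ (higher Ext vanishing by Lemma~\ref{lemma of extension equals 0 with length 2}), so the Euler form drops out immediately. The other two identities are declared similar.

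Your route instead passes through the Grothendieck group: you split $[K_{A,m}]=[U_{A,m}]+[U_{A,m-1}]$ via the short exact sequence and reduce everything to Euler forms of stalk complexes, which you read off from Proposition~\ref{proposition homological finiteness of stalk complex}. In effect you are proving (and using) what the paper later records as Proposition~\ref{proposition Euler form of stalk complexes} as an auxiliary step, and then deducing Proposition~\ref{proposition euler form of acyclic complexes} from it. This is slightly more bookkeeping-heavy because of the four-case sign analysis, but it has the advantage of never needing to identify the individual Ext groups $\Ext^p(K_{A,m},U_{B,n})$; only the alternating products matter. One small wording slip: for the second identity the same short exact sequence must be inserted in the \emph{second} argument of $\langle-,-\rangle$, not the first, since $K_{A,m}$ now sits on the right.
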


\begin{proof}
We just prove the first identity and the proofs of others are similar. It is easy to see that if $n\neq m-1$, then  $\Ext^p_{\cc^b(\ca)}(K_{A,m},U_{B,n})=0$ for any $p\geq 0$. When $n=m-1$, we have that $\Hom _{\cc^b(\ca)}(K_{A,m},U_{B,n})\cong\Hom_\ca(A,B)$ and $\Ext^1_{\cc^b(\ca)}(K_{A,m},U_{B,n})\cong\Ext^1_{\ca}(A,B)$, and so $\langle [K_{A,m}],[U_{B,n}]\rangle =\langle \widehat{A},\widehat{B}\rangle^{\delta^n_{m-1}}$.
\end{proof}

By Proposition \ref{proposition homological finiteness of stalk complex}, one can obtain the following properties of stalk complexes.
\begin{proposition}\label{proposition Euler form of stalk complexes}
For any $A,B\in\ca$ and  $m>n\in \mathbb{Z}$, we have $\langle [U_{A,m}],[U_{B,m}]\rangle=\langle \widehat{A}, \widehat{B}\rangle$, $\langle [U_{A,m}],[U_{B,n}]\rangle=1, \langle [U_{B,n}],[U_{A,m}]\rangle=\langle \widehat{B}, \widehat{A}\rangle^{(-1)^{(m-n)}}$.
\end{proposition}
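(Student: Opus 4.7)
The plan is to read off the claim directly from the definition of the multiplicative Euler form on $K_0(\cc^b(\ca))$, feeding in the $\Ext$-group computations already established in Proposition \ref{proposition homological finiteness of stalk complex}. Since the Euler form is defined as the alternating product $\langle [M],[N]\rangle=\prod_{p\geq 0}|\Ext^p_{\cc^b(\ca)}(M,N)|^{(-1)^p}$, each of the three identities will follow once the relevant $\Ext$ groups are identified and the finiteness of the product is checked.

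For the first identity, I would invoke the observation (used in the proof of Proposition \ref{proposition homological finiteness of stalk complex}(1)) that $\Ext^p_{\cc^b(\ca)}(U_{A,m},U_{B,m})\cong \Ext^p_\ca(A,B)$ for all $p\geq 0$. Since $\ca$ is hereditary, only $p=0,1$ contribute, so the alternating product collapses to $|\Hom_\ca(A,B)|/|\Ext^1_\ca(A,B)|=\langle\widehat{A},\widehat{B}\rangle$. For the second identity, Proposition \ref{proposition homological finiteness of stalk complex}(2) gives $\Ext^p_{\cc^b(\ca)}(U_{A,m},U_{B,n})=0$ for every $p\geq 0$ when $m>n$, so the product is the empty product $1$.

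For the third identity, Proposition \ref{proposition homological finiteness of stalk complex}(3) tells me that the only nonvanishing $\Ext$ groups in $\langle [U_{B,n}],[U_{A,m}]\rangle$ are $\Ext^{m-n}_{\cc^b(\ca)}(U_{B,n},U_{A,m})\cong \Hom_\ca(B,A)$ and $\Ext^{m-n+1}_{\cc^b(\ca)}(U_{B,n},U_{A,m})\cong \Ext^1_\ca(B,A)$. Their exponents in the alternating product are $(-1)^{m-n}$ and $-(-1)^{m-n}$ respectively, so collecting terms gives
\[\langle [U_{B,n}],[U_{A,m}]\rangle=\left(\frac{|\Hom_\ca(B,A)|}{|\Ext^1_\ca(B,A)|}\right)^{(-1)^{m-n}}=\langle\widehat{B},\widehat{A}\rangle^{(-1)^{m-n}},\]
as required.

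There is essentially no conceptual obstacle; the proof is a bookkeeping exercise. The only point to be mildly careful about is that the Euler form is a priori an infinite product, so one must confirm that Proposition \ref{proposition homological finiteness of stalk complex} really does kill all $\Ext^p$ for $p$ sufficiently large in each of the three cases, so that the product is finite and well-defined. This was already done in the preceding results, so no further work is needed.
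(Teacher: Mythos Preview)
Your proposal is correct and takes exactly the approach the paper intends: the paper gives no explicit proof, merely stating that the result follows from Proposition~\ref{proposition homological finiteness of stalk complex}, and you have simply spelled out the alternating-product computation in each of the three cases. There is nothing to add.
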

\section{Modified Ringel-Hall Algebras}
Let $\varepsilon$ be an essentially small exact category, linear over a finite field.
Assume that $\varepsilon$ has finite morphism and extension spaces:
$$|\Hom_\varepsilon(A,B)|<\infty,\quad |\Ext^1_{\varepsilon}(A,B)|<\infty,\,\,\forall A,B\in\varepsilon.$$

Given objects $A,B,C\in\varepsilon$, define $\Ext_\varepsilon^1(A,C)_B\subseteq \Ext_\varepsilon^1(A,C)$ as the subset parameterizing extensions whose middle term is isomorphic to $B$. We define the Ringel-Hall algebra $\ch(\varepsilon)$ to be the $\Q$-vector space whose basis is formed by the isomorphism classes $[A]$ of objects $A$ of $\varepsilon$, with the multiplication
defined by
$$[A]\diamond [C]=\sum_{[B]\in \Iso(\varepsilon)}\frac{|\Ext^1_{\varepsilon}(A,C)_B|}{|\Hom_{\varepsilon}(A,C)|}[B].$$
It is well-known that
the algebra $\ch(\varepsilon)$ is associative and unital. And the unit of the Ringel-Hall algebra is $[0]$, where $0$ is the zero object of $\varepsilon$, see \cite{R0} and also \cite{Sch2,P,Br}.

\subsection{Modified Ringel-Hall Algebras}
Let $\ch(\cc^b(\ca))$ be the Ringel-Hall algebra of $\cc^b(\ca)$, i.e., for any $L,M\in \cc^b(\ca)$, the Hall product is defined to be the following sum:
\begin{equation*}
[L]\diamond [M]=\sum_{[X]\in \Iso(\cc^b(\ca))}\frac{|\Ext^1_{\cc^b(\ca)}(L,M)_X|}{|\Hom_{\cc^b(\ca)}(L,M)|}[X].
\end{equation*}

Let $\ch(\cc^b(\ca))/I$ be the quotient algebra, where $I$ is the  ideal of $\ch(\cc^b(\ca))$ generated by all differences $[L]-[K\oplus M]$, if there is a short exact sequence $K\rightarrowtail L\twoheadrightarrow M$ in $\cc^b(\ca)$ with $K$ acyclic. We also denote by $\diamond$ the induced multiplication in $\ch(\cc^b(\ca))/I$.

One can easily prove  the following lemma.

\begin{lemma}\label{lemma multiplcation in quotient algebra}
For any $K\in\cc^b_{ac}(\ca)$ and $M\in\cc^{b}(\ca)$, then
\begin{eqnarray*}
&& [M]\diamond [K]=\frac{1}{\langle [M],[K]\rangle}[M\oplus K],
\end{eqnarray*}
in $\ch(\cc^b(\ca))/I$. In particular, for any $K_1,K_2\in\cc^b_{ac}(\ca)$, we have
\begin{eqnarray*}
&&[K_1]\diamond [K_2]=\frac{1}{\langle [K_1],[K_2]\rangle}[K_1\oplus K_2]
\end{eqnarray*}
in $\ch(\cc^b(\ca))/I$.
\end{lemma}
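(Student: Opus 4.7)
The plan is to unpack the definition of the Hall product and then exploit the fact that $K$ is acyclic in two places: once to collapse every extension class to the split class modulo $I$, and once to replace the ratio $|\Ext^1|/|\Hom|$ by a single Euler-form factor.

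First, by the definition of the Hall product,
$$[M]\diamond [K]=\sum_{[X]\in\Iso(\cc^b(\ca))}\frac{|\Ext^1_{\cc^b(\ca)}(M,K)_X|}{|\Hom_{\cc^b(\ca)}(M,K)|}[X].$$
Every $X$ appearing in this sum fits in a short exact sequence $0\to K\to X\to M\to 0$ in $\cc^b(\ca)$ with $K$ acyclic, so by the very definition of $I$ we have $[X]=[K\oplus M]=[M\oplus K]$ in $\ch(\cc^b(\ca))/I$. Hence all terms contribute to the same basis element and can be summed, giving
$$[M]\diamond [K]=\frac{\sum_{[X]}|\Ext^1_{\cc^b(\ca)}(M,K)_X|}{|\Hom_{\cc^b(\ca)}(M,K)|}[M\oplus K]=\frac{|\Ext^1_{\cc^b(\ca)}(M,K)|}{|\Hom_{\cc^b(\ca)}(M,K)|}[M\oplus K]$$
in $\ch(\cc^b(\ca))/I$, where the last equality is the partition $\Ext^1(M,K)=\bigsqcup_{[X]}\Ext^1(M,K)_X$.

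Next, I invoke Proposition \ref{proposition extension 2 zero}: since $K$ is acyclic, $\Ext^p_{\cc^b(\ca)}(M,K)=0$ for all $p\geq 2$, so the multiplicative Euler form truncates to
$$\langle [M],[K]\rangle=\frac{|\Hom_{\cc^b(\ca)}(M,K)|}{|\Ext^1_{\cc^b(\ca)}(M,K)|},$$
as was already recorded in the paragraph preceding Proposition \ref{proposition euler form of acyclic complexes}. Substituting into the previous display yields exactly
$$[M]\diamond [K]=\frac{1}{\langle [M],[K]\rangle}[M\oplus K],$$
which is the first claimed identity. The special case $M=K_1\in\cc^b_{ac}(\ca)$, $K=K_2\in\cc^b_{ac}(\ca)$ of this identity immediately gives the second formula.

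There is really no hard step here: the whole content lies in noticing that the defining relation of $I$ is tailor-made so that the Hall product of something acyclic on either side collapses to the direct sum. The only thing one has to check carefully is that Ext$^{\geq 2}$ vanishes, so that the Euler form on $\cc^b(\ca)$ agrees with the usual Ringel-Hall quotient $|\Hom|/|\Ext^1|$; this is exactly Proposition \ref{proposition extension 2 zero}, already established.
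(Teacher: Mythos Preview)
Your proof is correct and is exactly the argument the paper has in mind; the paper does not write out a proof at all, merely stating ``One can easily prove the following lemma,'' and your computation---collapsing every extension with acyclic kernel to the split one via the defining relation of $I$, then invoking Proposition~\ref{proposition extension 2 zero} to reduce the Euler form to $|\Hom|/|\Ext^1|$---is the intended easy verification.
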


We set $S$ to be the subset of $\ch(\cc^b(\ca))/I$ formed by all $q[K]$, where $q\in \mathbb{Q}^\times, K\in\cc_{ac}^b(\ca)$. It is trivial to check that $S$ is a multiplicatively closed subset with identity $[0]\in S$. And similar to \cite{LuP} one can see that the right localization of $\ch(\cc^b(\ca))/I$ with respect to $S$ exists, denoted  by $(\ch(\cc^b(\ca))/I)[S^{-1}]$. Here we also denote by $\diamond$ the multiplication in $(\ch(\cc^b(\ca)/I)[S^{-1}]$.

\begin{definition}\label{definition of the modified Ringel-hall algebra} We denote $\cm\ch(\ca):=(\ch(\cc^b(\ca)/I)[S^{-1}]$, called the modified Ringel-Hall algebra (of bounded complexes) of $\ca$.
\end{definition}

Consider the set $\Iso(\cc^b_{ac}(\ca))$ of isomorphism classes $[K]$ of bounded acyclic complexes and its quotient by the following set of relations:
$$\langle [K_2]=[K_1\oplus K_3]| K_1\rightarrowtail K_2\twoheadrightarrow K_3 \mbox{ is a short exact sequence}\rangle.$$
If we endow $\Iso(\cc^b_{ac}(\ca))$ with the addition given by direct sums, this quotient gives the \emph{Grothendieck monoid} $M_0(\cc^b_{ac}(\ca))$ of $\cc^b_{ac}(\ca)$.
Define the \emph{quantum affine space} of acyclic complexes $\A_{ac}({\ca})$ as the $\Q$-monoid algebra of the Grothendieck monoid $M_0(\cc^b_{ac}(\ca))$, with the multiplication twisted by the inverse of the Euler form, i.e., the product of classes of acyclic complexes $K_1,K_2\in\cc_{ac}^b({\ca})$ is defined as follows:
$$[K_1]\diamond[K_2]:=\frac{1}{\langle [K_1],[K_2]\rangle}[K_1\oplus K_2].$$

Define the \emph{quantum torus} of acyclic complexes $\T_{ac}({\ca})$ as the $\Q$-group algebra of $K_0(\cc_{ac}^b({\ca}))$, with the
multiplication twisted by the inverse of the Euler form as above.
Note that $\T_{ac}(\ca)$ is the right localization of $\A_{ac}(\ca)$ with respect to the set formed by all the classes of acyclic complexes.

Note that $\cm\ch(\ca)$ is a $\T_{ac}(\ca)$-bimodule with the bimodule structure induced by the Hall product. By Lemma \ref{lemma multiplcation in quotient algebra}, we have the following lemma and the proof is similar to that of Lemma 3.4 in \cite{LuP}.
\begin{lemma}\label{lemma hall multiplicatoin of acyclic complexes}
For any $K_1,K_2\in\cc^b_{ac}(\ca)$ and $M\in\cc^b(\ca)$, we have
\begin{eqnarray*}
&&[K]\diamond [M]=\frac{1}{\langle [K],[M]\rangle}[K\oplus M]=\frac{\langle [M],[K]\rangle}{\langle [K],[M]\rangle}[M]\diamond [K]\\
&&[K_1]^{-1}\diamond [K_2]^{-1}=\langle [K_2],[K_1]\rangle [K_1\oplus K_2]^{-1},\\
&&[K_1]^{-1}\diamond [K_2]=\frac{\langle [K_1], [K_2]\rangle}{\langle [K_2],[K_1]\rangle}[K_2]\diamond [K_1]^{-1}
\end{eqnarray*}
in $\cm\ch(\ca)$.
\end{lemma}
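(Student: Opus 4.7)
The plan is to derive each of the three identities by combining the preceding Lemma~\ref{lemma multiplcation in quotient algebra} with the structure of the right Ore localization $\cm\ch(\ca)=(\ch(\cc^b(\ca))/I)[S^{-1}]$. Throughout, I use that for any $K\in\cc^b_{ac}(\ca)$ the class $[K]$ is invertible in $\cm\ch(\ca)$ by construction of $S^{-1}$.

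For the first identity, I would compute $[K]\diamond[M]$ directly. Unwinding the definition of the Hall product gives
$$[K]\diamond[M]=\sum_{[X]}\frac{|\Ext^1_{\cc^b(\ca)}(K,M)_X|}{|\Hom_{\cc^b(\ca)}(K,M)|}[X].$$
Since every extension $K\rightarrowtail X\twoheadrightarrow M$ has acyclic kernel, the defining relations of $I$ force $[X]=[K\oplus M]$ in $\ch(\cc^b(\ca))/I$, so one may factor $[K\oplus M]$ out of the sum. Because $K$ is acyclic, Proposition~\ref{proposition extension 2 zero} gives $\Ext^p_{\cc^b(\ca)}(K,M)=0$ for $p\geq 2$, so $\langle[K],[M]\rangle=|\Hom_{\cc^b(\ca)}(K,M)|/|\Ext^1_{\cc^b(\ca)}(K,M)|$, and the first equality follows. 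The commutation $[K]\diamond[M]=\frac{\langle[M],[K]\rangle}{\langle[K],[M]\rangle}[M]\diamond[K]$ then drops out by comparing with Lemma~\ref{lemma multiplcation in quotient algebra} and noting $[K\oplus M]=[M\oplus K]$.

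For the second identity, I specialize the first formula to two acyclic complexes: $[K_1]\diamond[K_2]=\frac{1}{\langle[K_1],[K_2]\rangle}[K_1\oplus K_2]$, so that $[K_1\oplus K_2]=\langle[K_1],[K_2]\rangle\,[K_1]\diamond[K_2]$. Taking inverses in $\cm\ch(\ca)$ with the standard rule $(fg)^{-1}=g^{-1}\diamond f^{-1}$ and relabeling $K_1\leftrightarrow K_2$ yields $[K_1]^{-1}\diamond[K_2]^{-1}=\langle[K_2],[K_1]\rangle[K_1\oplus K_2]^{-1}$. For the third identity, I start from the commutation $[K_1]\diamond[K_2]=\frac{\langle[K_2],[K_1]\rangle}{\langle[K_1],[K_2]\rangle}[K_2]\diamond[K_1]$ (the case of the first identity with both arguments acyclic) and conjugate: multiplying on the left and right by $[K_1]^{-1}$ produces exactly $[K_1]^{-1}\diamond[K_2]=\frac{\langle[K_1],[K_2]\rangle}{\langle[K_2],[K_1]\rangle}[K_2]\diamond[K_1]^{-1}$.

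The main obstacle is not any single calculation but ensuring that the right localization in the definition of $\cm\ch(\ca)$ is well behaved enough that the manipulations with $[K]^{-1}$ above are legitimate and pick up no extra scalar; this is precisely what the Ore condition (proved exactly as in \cite{LuP} for the $\Z/2$-graded case) guarantees. Once that framework is invoked, all three identities reduce to short scalar bookkeeping, with the $\Z$-graded setting introducing no new complication beyond the $\Z/2$-graded one.
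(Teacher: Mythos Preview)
Your arguments for the second and third identities are fine: inverting the relation $[K_1]\diamond[K_2]=\langle[K_1],[K_2]\rangle^{-1}[K_1\oplus K_2]$ from Lemma~\ref{lemma multiplcation in quotient algebra} and conjugating by $[K_1]^{-1}$ are legitimate moves in the localization and produce the stated formulas. The gap is in the first identity.

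In the Hall product
\[
[K]\diamond[M]=\sum_{[X]}\frac{|\Ext^1_{\cc^b(\ca)}(K,M)_X|}{|\Hom_{\cc^b(\ca)}(K,M)|}\,[X],
\]
the extensions parametrized by $\Ext^1_{\cc^b(\ca)}(K,M)$ are of the form $0\to M\to X\to K\to 0$: the acyclic complex $K$ is the \emph{quotient}, not the kernel. Your sentence ``every extension $K\rightarrowtail X\twoheadrightarrow M$ has acyclic kernel'' has the arrow reversed, and once the direction is corrected the appeal to the generators of $I$ no longer applies, because $I$ is generated by $[L]-[K'\oplus M']$ for sequences $K'\rightarrowtail L\twoheadrightarrow M'$ with the acyclic term sitting as \emph{subobject}. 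Lemma~\ref{lemma multiplcation in quotient algebra} handles precisely that case and gives $[M]\diamond[K]=\langle[M],[K]\rangle^{-1}[M\oplus K]$; the dual identity $[K]\diamond[M]=\langle[K],[M]\rangle^{-1}[K\oplus M]$ is the genuinely new content of the present lemma and does not fall out of the generating relations by inspection.

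What is actually needed is that $[X]=[M\oplus K]$ in $\ch(\cc^b(\ca))/I$ for every short exact sequence $0\to M\to X\to K\to 0$ with $K$ acyclic, and establishing this uses that $I$ is a two-sided ideal together with the hereditary hypothesis on $\ca$; it is not a one-line observation. The paper does not spell this out either, referring instead to Lemma~3.4 of \cite{LuP}, and the same point resurfaces as Lemma~\ref{lemma coincide of the structure of two bimodule}. Once that step is supplied, your scalar bookkeeping goes through and the remaining two identities follow exactly as you wrote.
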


For $A,B\in \ca$ with $\widehat{A}=\widehat{B}$ in the Grothendieck group of $\ca$, for any $m\in\mathbb{Z}$, in $\cm\ch(\ca)$ we have $[K_{A, m}]=[K_{B, m}]$, denoted by $K_{\widehat{A}, m}$, and if $\alpha=\widehat{A}-\widehat{B}$ we define $K_{\alpha, m}=\frac{1}{\langle\alpha, \widehat{B}\rangle}[K_{A, m}]\diamond[K_{B, m}]^{-1}$. As the corresponding proof in \cite{LuP} we know that $K_{\alpha, m}$ is independent of the expression of $\alpha$ in the Grothendieck group of $\ca$.

\subsection{The Basis of Modified Ringel-Hall Algebras}In this subsection, analogous to \cite{LuP} we also give a basis of $\cm\ch(\ca)$.

We define $I'$ to be the linear subspace of $\ch(\cc^b(\ca))$ spanned by
$$\{[L]-[K\oplus M]|\ K\rightarrowtail L\twoheadrightarrow M \mbox{ is a short exact sequence in }\cc^b(\ca)\mbox{ with }K\mbox{ acyclic} \}.$$
And define a bimodule structure of the quotient space $\ch(\cc^b(\ca))/I'$ over $\A_{ac}(\ca)$ by the rule
\begin{equation}\label{definition of bimodule}
[K]\diamond[M]:=\frac{1}{\langle [K],[M]\rangle}[K\oplus M],\quad [M]\diamond[K]:=\frac{1}{\langle [M],[K]\rangle}[M\oplus K].
\end{equation}
Furthermore, we set
$$(\ch(\cc^b(\ca))/I')[S^{-1}]:= \T_{ac}(\ca)\otimes_{\A_{ac}(\ca)}(\ch(\cc^b(\ca))/I')\otimes_{\A_{ac}(\ca)} \T_{ac}(\ca),$$
which is a bimodule over the quantum torus $\T_{ac}(\ca)$.
\begin{lemma}\label{lemma coincide of the structure of two bimodule}
The bimodule structure of
$(\ch(\cc^b(\ca))/I')[S^{-1}]$ is induced by the Hall product.
\end{lemma}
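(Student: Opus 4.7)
The claim asserts that on $(\ch(\cc^b(\ca))/I')[S^{-1}]$, the $\T_{ac}(\ca)$-bimodule structure defined by the formulas in (\ref{definition of bimodule}) (and then extended by two-sided localization) agrees with the one obtained from the Hall product $\diamond$. Since the $\T_{ac}(\ca)$-action is the two-sided localization of an $\A_{ac}(\ca)$-action, it suffices to establish, for every $K\in\cc^b_{ac}(\ca)$ and every $M\in\cc^b(\ca)$, the two congruences
\[
[K]\diamond[M]\equiv\tfrac{1}{\langle[K],[M]\rangle}[K\oplus M]\pmod{I'},\qquad [M]\diamond[K]\equiv\tfrac{1}{\langle[M],[K]\rangle}[M\oplus K]\pmod{I'}
\]
directly in $\ch(\cc^b(\ca))/I'$.

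The plan is to first dispatch the right-action congruence. Expanding via the definition of the Hall product yields
\[
[M]\diamond[K]=\sum_{[X]\in\Iso(\cc^b(\ca))}\frac{|\Ext^1_{\cc^b(\ca)}(M,K)_X|}{|\Hom_{\cc^b(\ca)}(M,K)|}[X],
\]
where $X$ ranges over the middle terms of short exact sequences $0\to K\to X\to M\to 0$. Because $K$ is acyclic and sits on the left, each such sequence is one of the defining generators of $I'$, so $[X]\equiv[K\oplus M]\pmod{I'}$ for every $[X]$ in the sum. Collecting terms using $\sum_{[X]}|\Ext^1(M,K)_X|=|\Ext^1(M,K)|$ and invoking Proposition~\ref{proposition extension 2 zero} (which, combined with Lemma~\ref{lemma of extension equals 0 with length 2}, gives $\langle[M],[K]\rangle=|\Hom(M,K)|/|\Ext^1(M,K)|$), produces the right-action congruence.

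The left-action congruence is the harder step and is where I expect the main obstacle to lie: it requires $[X]\equiv[K\oplus M]\pmod{I'}$ for middle terms $X$ of extensions $0\to M\to X\to K\to 0$, in which the acyclic appears as a quotient rather than as a subobject, so the defining generators of $I'$ do not apply verbatim. I would overcome this by first reducing to the elementary case $K=K_{A,m}$ through induction on the length of a filtration of a bounded acyclic by such pieces (as used in Proposition~\ref{proposition extension 2 zero}), and then, for $K_{A,m}$, combining the given extension with the canonical short exact sequence $0\to U_{A,m}\to K_{A,m}\to U_{A,m-1}\to 0$ in a $3\times 3$ pullback/pushout diagram. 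The new rows and columns furnish short exact sequences in which acyclic complexes do sit as subobjects, expressing $[X]-[K\oplus M]$ as a $\Q$-linear combination of defining generators of $I'$. With this identity in hand, summing the Hall coefficients exactly as in the right-action case yields the desired congruence.

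Finally, once both congruences are verified modulo $I'$, the $\A_{ac}(\ca)$-bimodule action on $\ch(\cc^b(\ca))/I'$ induced by Hall multiplication coincides on the nose with the one introduced in (\ref{definition of bimodule}). Applying two-sided localization at $S$ then upgrades the identification to a matching of $\T_{ac}(\ca)$-bimodule structures on $(\ch(\cc^b(\ca))/I')[S^{-1}]$, which is the content of the lemma.
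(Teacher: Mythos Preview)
The paper does not supply its own proof here; it states that the argument is similar to Lemma~3.5 in \cite{LuP} and omits it. So there is no direct comparison to be made with the paper's argument. However, your reduction contains a genuine error that is worth naming.

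Your plan is to establish both congruences already in $\ch(\cc^b(\ca))/I'$, before localizing. The right-action congruence is fine, exactly as you say. But the left-action congruence
\[
[K]\diamond[M]\equiv\frac{1}{\langle[K],[M]\rangle}[K\oplus M]\pmod{I'}
\]
is \emph{false} in general, so no $3\times3$ argument can produce it. Take $\ca=\mathrm{rep}(A_2)$ with simples $S_1,S_2$ and the nonsplit extension $0\to S_2\to P_1\to S_1\to 0$. Let $K=K_{S_1,0}$ and $M=U_{S_2,-1}$. The nonsplit extension of $K$ by $M$ has middle term $X=(\cdots\to 0\to P_1\to S_1\to 0\to\cdots)$. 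The only subobjects of $P_1$ are $0$, $S_2$, $P_1$, and one checks directly that $X$ has no nonzero acyclic subcomplex and no nonzero acyclic direct summand. Hence $[X]$ does not occur in any nontrivial generator $[L]-[K'\oplus M']$ of $I'$, and therefore $[X]-[K\oplus M]\notin I'$. Since $[K]\diamond[M]=[K\oplus M]+(q-1)[X]$ while $\frac{1}{\langle[K],[M]\rangle}[K\oplus M]=q\,[K\oplus M]$, the congruence fails modulo $I'$.

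In particular, your proposed $3\times3$ diagram built from $0\to U_{A,m}\to K_{A,m}\to U_{A,m-1}\to 0$ cannot furnish ``short exact sequences in which acyclic complexes sit as subobjects'': the new rows and columns have the stalk complexes $U_{A,m}$, $U_{A,m-1}$ as their third terms, which are not acyclic.

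What the lemma is really asserting is an identity in the \emph{localized} bimodule $(\ch(\cc^b(\ca))/I')[S^{-1}]$, and the localization is essential. The point (implicit in the proof of Proposition~\ref{proposition multiplication decompisition of complexes}) is that for any $X$ appearing in $[K]\diamond[M]$ one can find acyclics $K_1,K_2$ with $0\to K_1\to X\oplus K_2\to H^*(M)\to 0$; only after inverting the classes $[K_i]$ does one obtain $[X]=[K\oplus M]$ in the localization (both sides reduce to the same expression in $K$-classes and $H^*(M)$). Your proof should therefore work in $(\ch(\cc^b(\ca))/I')[S^{-1}]$ from the start, not attempt to prove the stronger (and false) statement modulo $I'$.
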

The proof of Lemma \ref{lemma coincide of the structure of two bimodule} is similar to that of Lemma 3.5 in \cite{LuP}, and we omit it.

For any nonzero complex $M\in\cc^b(\ca)$,  $$M=\cdots\rightarrow 0\rightarrow M^l\rightarrow\cdots\rightarrow M^r\rightarrow 0\rightarrow\cdots,$$
where $M^l$ is the leftmost nonzero component and $M^r$ is the rightmost nonzero component, then the {\em width} of $M$ is defined to be $r-l+1$. And if $M$ is zero, then the width of $M$ is defined to be $0$.
\begin{lemma}\label{lemma decomposition of acyclic complex}
Let $K=(K^i,d^i)\in\cc^b(\ca)$ be an acyclic complex. If the rightmost and the leftmost nonzero components of $K$ are $K^r$ and $K^l$ respectively. Then in $(\ch(\cc^b(\ca))/I')[S^{-1}]$ we have
$$[K]=[K_{\Im(d^l),l+1}]\diamond[K_{\Im(d^{l+1}),l+2}]\diamond\cdots\diamond[K_{\Im(d^{r-1}),r}].$$

\end{lemma}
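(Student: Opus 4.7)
The plan is to proceed by induction on the width $w=r-l+1$ of the acyclic complex $K$. In the base case $w=2$, the complex reduces to $0\to K^l\xrightarrow{d^l} K^{l+1}\to 0$ with $d^l$ forced to be an isomorphism by acyclicity, so $K\cong K_{K^l,l+1}=K_{\Im(d^l),l+1}$ and the identity holds with a single factor on the right.

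For the inductive step, the key construction is the short exact sequence of complexes in $\cc^b(\ca)$
\[
0\to K_{\Im(d^l),l+1}\to K\to K'\to 0,
\]
where the embedding $K_{\Im(d^l),l+1}\hookrightarrow K$ is given in degree $l$ by $(d^l)^{-1}\colon \Im(d^l)\xrightarrow{\sim} K^l$ (using injectivity of $d^l$, which follows from acyclicity at $l$) and in degree $l+1$ by the inclusion $\Im(d^l)\hookrightarrow K^{l+1}$. A componentwise calculation then shows $(K')^{l+1}=K^{l+1}/\Im(d^l)\cong \Im(d^{l+1})$ by acyclicity at $l+1$, while $(K')^i=K^i$ for $i\geq l+2$, and that $K'$ is an acyclic complex of width $r-l<w$ whose differentials agree with those of $K$ on the relevant components.

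Since the subcomplex $K_{\Im(d^l),l+1}$ is acyclic, the defining relation of $I'$ yields $[K]\equiv [K_{\Im(d^l),l+1}\oplus K']$ in $\ch(\cc^b(\ca))/I'$. Passing to $(\ch(\cc^b(\ca))/I')[S^{-1}]$ and invoking the bimodule formula (3.1), this becomes
\[
[K]=\langle [K_{\Im(d^l),l+1}],[K']\rangle\cdot [K_{\Im(d^l),l+1}]\diamond [K'],
\]
and the inductive hypothesis applied to $K'$ then rewrites $[K']$ as $[K_{\Im(d^{l+1}),l+2}]\diamond\cdots\diamond [K_{\Im(d^{r-1}),r}]$, producing the right-hand side claimed in the lemma.

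The main obstacle is to verify that the scalar prefactor is trivial so that no Euler twist appears in the final expression. Using bilinearity of the Euler form on $K_0(\cc^b(\ca))$ together with the identity $[K']=\sum_{i=l+1}^{r-1}[K_{\Im(d^i),i+1}]$ in $K_0(\cc^b(\ca))$, obtained by iterating the same short exact sequences used in the induction, this reduces to checking that $\langle [K_{A,l+1}],[K_{B,i+1}]\rangle=1$ for every $i\geq l+1$. Proposition \ref{proposition euler form of acyclic complexes} evaluates this Euler form as $\langle \widehat{A},\widehat{B}\rangle^{\delta^{i+1}_{l+1}+\delta^{i+1}_{l}}$, and both Kronecker deltas vanish in the range $i\geq l+1$. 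Hence $\langle [K_{\Im(d^l),l+1}],[K']\rangle=1$, the inductive step closes, and the same degree-separation argument ensures no cross-term twists appear at any stage of the iteration.
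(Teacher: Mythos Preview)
Your proof is correct and follows essentially the same approach as the paper: induction on the width, peeling off the acyclic subcomplex $K_{\Im(d^l),l+1}$ via the short exact sequence $0\to K_{\Im(d^l),l+1}\to K\to K'\to 0$, using the defining relation of $I'$ to replace $[K]$ by $[K_{\Im(d^l),l+1}\oplus K']$, and then checking via Proposition~\ref{proposition euler form of acyclic complexes} that the Euler form $\langle [K_{\Im(d^l),l+1}],[K']\rangle$ equals $1$. The paper states this last point in one line, whereas you justify it more explicitly by decomposing $[K']$ in $K_0(\cc^b(\ca))$ as $\sum_{i=l+1}^{r-1}[K_{\Im(d^i),i+1}]$ and evaluating each pairing; this is a harmless elaboration rather than a different argument.
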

\begin{proof}
If $r=l$, it is clear. If $r>l$, set $K_{r-l+1}=K$. And then we have the following short exact sequence of acyclic complexes
$$0\rightarrow K_{\Im(d^l),l+1}\rightarrow K_{r-l+1}\rightarrow K_{r-l} \rightarrow 0,$$
where $K_{r-l}=\cdots\rightarrow 0\rightarrow K^{l+1}/\Im(d^l) \rightarrow K^{l+2}\rightarrow\cdots\rightarrow K^{r-1}\rightarrow K^r\rightarrow 0 \rightarrow\cdots$. It is not hard to see
$$\langle K_{\Im(d^l),l+1},K_{r-l}\rangle=1$$
by Proposition \ref{proposition euler form of acyclic complexes}.
Hence we have
$$[K]=\langle K_{\Im(d^l),l+1},K_{r-l}\rangle [K_{\Im(d^l),l+1}]\diamond[K_{r-1}]=[K_{\Im(d^l),l+1}]\diamond[K_{r-l}].$$
Using induction on the width of complexes we can get that
$$[K]=[K_{\Im(d^l),l+1}]\diamond[K_{\Im(d^{l+1}),l+2}]\diamond\cdots\diamond[K_{\Im(d^{r-1}),r}].$$
\end{proof}

Similar to Proposition 3.7 in \cite{LuP}, we have the following proposition and we omit the proof.
\begin{proposition}\label{proposition equivalences of some exact sequences}
Let $\cb$ be an abelian category. Given objects $$U=(U^i,d_{U}^i),V=(V^i,d_{V}^i),W=(W^i,d_{W}^i)\in\cc^b(\cb),$$
if there is a short exact sequence $0\rightarrow U\xrightarrow{h_1} V\xrightarrow{h_2} W\rightarrow0$, then the following statements are equivalent.

(i) $0\rightarrow \Im(d_U^i)\xrightarrow{t_1^i} \Im(d_V^i)\xrightarrow{t_2^i} \Im(d_W^i)\rightarrow 0$ is short exact for each $i\in\mathbb{Z}$.

(ii) $0\rightarrow \Ker(d_U^i)\xrightarrow{s_1^i} \Ker(d_V^i)\xrightarrow{s_2^i} \Ker(d_W^i)\rightarrow 0$ is short exact for each $i\in\mathbb{Z}$.

(iii) $0\rightarrow H^i(U)\xrightarrow{r_1^i} H^i(V)\xrightarrow{r_2^i} H^i(W)\rightarrow 0$ is short exact for each $i\in\mathbb{Z}$.

\noindent Here the morphisms are induced by $h_1$ and $h_2$, and $H^i(M)(i\in\mathbb{Z})$ denote the homologies of a complex $M\in\cc^b(\cb)$.

In particular, if $U$ or $W$ is acyclic,
then for each $i\in\mathbb{Z}$
$$\widehat{\Im(d_V^i)}=\widehat{\Im(d_U^i)}+ \widehat{\Im(d_W^i)}$$
in $K_0(\cb)$.
\end{proposition}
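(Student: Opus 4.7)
The plan is to reduce everything to two standard diagram lemmas applied degree by degree.

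First, I would show (i) $\Leftrightarrow$ (ii) via the nine lemma. Fix $i$ and assemble the commutative diagram whose three columns are the canonical short exact sequences
\[ 0 \to \Ker(d_X^i) \to X^i \to \Im(d_X^i) \to 0 \qquad (X\in\{U,V,W\}) \]
and whose middle row is the hypothesised $0 \to U^i \to V^i \to W^i \to 0$. In this $3\times 3$ diagram the columns and the middle row are already short exact. The nine lemma says that any two short-exact rows force the third; so the top row (kernels) is short exact iff the bottom row (images) is, which is exactly (i) $\Leftrightarrow$ (ii).

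Second, for (ii) $\Leftrightarrow$ (iii) I would use the snake lemma. Applied to the commutative square with rows $0\to U^i\to V^i\to W^i\to 0$ and $0\to U^{i+1}\to V^{i+1}\to W^{i+1}\to 0$ joined by the differentials, it yields the six-term sequence
\[ 0 \to \Ker(d_U^i) \to \Ker(d_V^i) \to \Ker(d_W^i) \xrightarrow{\delta^i} \Coker(d_U^i) \to \Coker(d_V^i) \to \Coker(d_W^i) \to 0. \]
Thus (ii) at index $i$ amounts to $\delta^i=0$. A routine chase then shows that $\delta^i$ annihilates $\Im(d_W^{i-1})$ and has image contained in $\Ker(d_U^{i+1})/\Im(d_U^i)=H^{i+1}(U)$, so it factors as
\[ \Ker(d_W^i)\twoheadrightarrow H^i(W)\xrightarrow{\bar\delta^i} H^{i+1}(U)\hookrightarrow \Coker(d_U^i), \]
where $\bar\delta^i$ is precisely the connecting map of the long exact cohomology sequence. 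Since the outer maps are epi and mono respectively, $\delta^i=0$ iff $\bar\delta^i=0$. Quantifying over $i$, (ii) is equivalent to the vanishing of every connecting map in cohomology, i.e.\ to (iii).

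For the final assertion, if $U$ or $W$ is acyclic then all $H^i$ of that complex vanish and every $\bar\delta^i$ is zero for free; hence (iii) holds, so (i) holds, and taking classes in $K_0(\cb)$ in each short exact sequence of images gives $\widehat{\Im(d_V^i)}=\widehat{\Im(d_U^i)}+\widehat{\Im(d_W^i)}$. The main obstacle is the factorization of $\delta^i$ through $\bar\delta^i$: this is where $d\circ d=0$ and the injectivity of $U^{i+1}\hookrightarrow V^{i+1}$ both enter, and the chase must be done with some care to verify both that $\delta^i$ kills coboundaries and that its image actually lies in the subobject $H^{i+1}(U)\subseteq \Coker(d_U^i)$. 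Everything else is boilerplate once that factorization is in hand.
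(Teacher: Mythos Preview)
Your argument is correct. The paper itself omits the proof entirely, writing only ``Similar to Proposition 3.7 in \cite{LuP}, we have the following proposition and we omit the proof.'' So there is no proof in the paper to compare against; your nine-lemma argument for (i)$\Leftrightarrow$(ii) and the snake-lemma factorisation $\delta^i = (\hookrightarrow)\circ\bar\delta^i\circ(\twoheadrightarrow)$ for (ii)$\Leftrightarrow$(iii) are the standard route and go through cleanly. One small remark: the version of the nine lemma you invoke (``middle row plus either outer row exact $\Rightarrow$ the remaining row exact'') is slightly stronger than the textbook statement some readers will know (two \emph{adjacent} rows $\Rightarrow$ the third), so it may be worth a one-line justification --- e.g.\ that the bottom row is automatically a complex and then a direct chase, or simply an appeal to the snake lemma applied to the two lower rows of your $3\times 3$ diagram.
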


\begin{proposition}\label{proposition multiplication decompisition of complexes}
Let $M=(M^i,d^i)\in\cc^b(\ca)$. If the rightmost and the leftmost nonzero components of $M$ are $M^r$ and $M^l$ respectively. Then in $(\ch(\cc^b(\ca))/I')[S^{-1}]$ we have
\begin{eqnarray*}
[M]&=&\langle \widehat{\Im(d^{r-1})},\widehat{\Ker(d^{r-1})}\rangle\langle \widehat{\Im(d^{r-2})}, \widehat{\Ker(d^{r-2})}\rangle\cdots
\langle \widehat{\Im(d^{l})},\widehat{\Ker(d^{l})}\rangle\\
&&[K_{\Im(d^{r-1}),r}]\diamond[K_{\Im(d^{r-2}),r-1}]\diamond\cdots\diamond[K_{\Im(d^{l}),l+1}]\diamond\\
&&[U_{H^r(M),r}\oplus U_{H^{r-1}(M),r-1}\oplus\cdots\oplus U_{H^{l}(M),l}].
\end{eqnarray*}

\end{proposition}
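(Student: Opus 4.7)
I proceed by induction on the width $w=r-l+1$ of $M$. For $w\le 1$ the statement is tautological: the product over acyclic complexes is empty, $H^l(M)=M^l$, and both sides equal $[U_{M^l,l}]=[M]$.

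For the inductive step with $w\ge 2$, I introduce the subcomplex
\[
N:\ \cdots\to 0\to M^{l}\to\cdots\to M^{r-2}\xrightarrow{d^{r-2}}\Ker(d^{r-1})\to 0\to\cdots
\]
of $M$ (it is a subcomplex because $d^{r-1}d^{r-2}=0$, so $d^{r-2}$ lands in $\Ker(d^{r-1})$). Its quotient is the two-term complex $M'=(\Im(d^{r-1})\hookrightarrow M^r)$ sitting in degrees $r-1,r$, giving a short exact sequence $0\to N\to M\to M'\to 0$ in $\cc^b(\ca)$. By construction $N$ has width $w-1$, its images are $\Im(d^l),\dots,\Im(d^{r-2})$, and its homologies are $H^l(M),\dots,H^{r-1}(M)$ (for the rightmost one, $H^{r-1}(N)=\Ker(d^{r-1})/\Im(d^{r-2})=H^{r-1}(M)$), so the inductive hypothesis applies to $N$.

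For the quotient $M'$, the tautological short exact sequence
\[
0\to K_{\Im(d^{r-1}),r}\to M'\to U_{H^r(M),r}\to 0
\]
has an acyclic subobject, so the defining relation of $I'$ gives $[M']=[K_{\Im(d^{r-1}),r}\oplus U_{H^r(M),r}]$ in $\ch(\cc^b(\ca))/I'$. By Proposition~\ref{proposition euler form of acyclic complexes} one has $\langle[K_{\Im(d^{r-1}),r}],[U_{H^r(M),r}]\rangle=1$ (since $m-1=r-1\neq r=n$), so Lemma~\ref{lemma multiplcation in quotient algebra} rewrites this as $[M']=[K_{\Im(d^{r-1}),r}]\diamond[U_{H^r(M),r}]$.

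The core inductive step is to derive, from the sequence $0\to N\to M\to M'\to 0$, the equality
\[
[M]=\langle\widehat{\Im(d^{r-1})},\widehat{\Ker(d^{r-1})}\rangle\,[M']\diamond[N]
\]
in $(\ch(\cc^b(\ca))/I')[S^{-1}]$. I plan to compute the Hall product $[M']\diamond[N]$ by identifying extensions: since $N$ is concentrated in degrees $\le r-1$ and $M'$ in degrees $r-1,r$, a direct chain-level analysis shows $\Ext^1_{\cc^b(\ca)}(M',N)\cong\Ext^1_\ca(\Im(d^{r-1}),\Ker(d^{r-1}))$ and $\Hom_{\cc^b(\ca)}(M',N)\cong\Hom_\ca(\Im(d^{r-1}),\Ker(d^{r-1}))$, with each middle term $E$ of such an extension determined by an extension $E^{r-1}$ of $\Im(d^{r-1})$ by $\Ker(d^{r-1})$ in $\ca$. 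The middle term corresponding to the natural extension $M^{r-1}$ recovers $M$ itself; the other middle terms are identified with $[M]$ modulo $I'$ by comparing each to the split middle term via auxiliary acyclic subcomplexes (an argument parallel to Proposition~3.7 of \cite{LuP}), and the quotient $|\Ext^1_\ca|/|\Hom_\ca|$ that remains is exactly $\langle\widehat{\Im(d^{r-1})},\widehat{\Ker(d^{r-1})}\rangle^{-1}$.

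Substituting the formula for $[M']$ and the inductive expression for $[N]$ into this identity, I will finally reorganise the Hall products using Lemmas~\ref{lemma multiplcation in quotient algebra},\ \ref{lemma hall multiplicatoin of acyclic complexes},\ \ref{lemma coincide of the structure of two bimodule} and evaluate the Euler forms appearing under the reordering via Propositions~\ref{proposition euler form of acyclic complexes} and~\ref{proposition Euler form of stalk complexes}. The extra scalars generated by moving $[U_{H^r(M),r}]$ past the acyclic factors and combining with the additivity $\widehat{\Im(d^i)}+\widehat{\Ker(d^i)}=\widehat{M^i}$ from Proposition~\ref{proposition equivalences of some exact sequences} telescope to produce the product $\prod_{i=l}^{r-1}\langle\widehat{\Im(d^i)},\widehat{\Ker(d^i)}\rangle$ stated in the proposition.

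\textbf{Main obstacle.} The delicate point is the identification of the coefficient of $[M]$ in $[M']\diamond[N]$: the middle terms of this Hall product are parametrised by $\Ext^1_\ca(\Im(d^{r-1}),\Ker(d^{r-1}))$ and need not be pairwise non-isomorphic, so one must track carefully how the acyclic relations of $I'$ (applied in the localisation $\cm\ch(\ca)$, where the relevant $[K_{X,m}]$ are invertible) collapse the different middle terms to the same class as $[M]$, with the Hall coefficients combining to precisely $\langle\widehat{\Im(d^{r-1})},\widehat{\Ker(d^{r-1})}\rangle^{-1}$ rather than some other quotient of Euler factors.
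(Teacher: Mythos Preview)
Your inductive set-up is correct: the short exact sequence $0\to N\to M\to M'\to 0$, the identification $[M']=[K_{\Im(d^{r-1}),r}]\diamond[U_{H^r(M),r}]$, and the computation $\Hom_{\cc^b(\ca)}(M',N)\cong\Hom_\ca(\Im(d^{r-1}),\Ker(d^{r-1}))$, $\Ext^1_{\cc^b(\ca)}(M',N)\cong\Ext^1_\ca(\Im(d^{r-1}),\Ker(d^{r-1}))$ are all right. The problem is exactly where you flag it.

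You need that every middle term $E$ of an extension $0\to N\to E\to M'\to 0$ satisfies $[E]=[M]$ in $(\ch(\cc^b(\ca))/I')[S^{-1}]$. These $E$ all have the same homologies and the same $\widehat{\Im(d^i_E)}$ as $M$, but they have width $w$, so the inductive hypothesis does not apply to them. The relation defining $I'$ only collapses extensions with an \emph{acyclic subobject}; for the sequence $0\to N\to E\to M'\to 0$ neither end is acyclic, and when you pull back along $K_{\Im(d^{r-1}),r}\hookrightarrow M'$ you obtain $0\to N\to P\to K_{\Im(d^{r-1}),r}\to 0$ with the acyclic piece as \emph{quotient}, which again is not an $I'$-relation. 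I could not find any arrangement of auxiliary $K_{X,m}$'s that produces an acyclic \emph{sub}complex linking a general $E$ to the split middle term $N\oplus M'$; every attempt requires a section of $E^{r-1}\twoheadrightarrow\Im(d^{r-1})$ or a retraction of $\Im(d^{r-1})\hookrightarrow M^r$, neither of which exists in general. Your reference to ``an argument parallel to Proposition~3.7 of \cite{LuP}'' does not supply this, since that proposition is about exactness of induced homology sequences, not about identifying classes modulo $I'$.

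The paper sidesteps this entirely. Rather than peeling off one degree and computing a Hall product of two non-acyclic complexes, it uses directly that $\ca$ is hereditary to get $M\cong\bigoplus_i U_{H^i(M),i}$ in $D^b(\ca)$, realises this by a roof with acyclic kernels (short exact sequences $0\to K_1\to X\oplus K_2\to M\to 0$ and $0\to K_3\to X\oplus K_4\to N\to 0$ with all $K_j$ acyclic), and applies the $I'$-relations to these. In the twisted bimodule product $[K]*[-]:=\langle[K],[-]\rangle[K]\diamond[-]=[K\oplus-]$ this yields $[M]=[K_2\oplus K_3]*[K_1\oplus K_4]^{-1}*[N]$ in one stroke; Lemma~\ref{lemma decomposition of acyclic complex} and Proposition~\ref{proposition equivalences of some exact sequences} then identify the acyclic factor, and the Euler forms of Propositions~\ref{proposition euler form of acyclic complexes}--\ref{proposition Euler form of stalk complexes} give the scalar. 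In effect, the step you are missing---comparing two width-$w$ complexes with the same homology---\emph{is} the roof argument, so your induction does not avoid it.
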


\begin{proof}
Firstly, we define the twisted bimodule structure over the quantum torus $\T_{ac}(\ca)$ by setting
$$[K]*[M]:=\langle [K],[M]\rangle[K]\diamond[M],\quad [M]*[K]:=\langle [M],[K]\rangle[M]\diamond[K].$$
So $[K]*[M]=[K\oplus M]=[M]*[K]$. And define $K_{\alpha, m}=[K_{A, m}]*[K_{B, m}]^{-1}$, if $\alpha=\widehat{A}-\widehat{B}$.

Let $N=\bigoplus_{i=l}^{r} U_{H^{i}(M),i}$. Then the complex $M$ is isomorphic to $N$ in the derived category $D^b(\ca))$ since $\ca$ hereditary. So it is not hard to see that there exists a bounded complex $X$ and acyclic bounded complexes $K_{i}(i=1,2,3,4)$ such that we have the following two short exact sequences
$$0\rightarrow K_1\rightarrow X\oplus K_2\rightarrow M\rightarrow 0$$
and $$0\rightarrow K_3\rightarrow X\oplus K_4\rightarrow N\rightarrow 0.$$
Thus $[X\oplus K_2\oplus K_4]=[K_1\oplus K_4\oplus M]$ and $[X\oplus K_2\oplus K_4]=[K_2\oplus K_3\oplus N]$, and then we get that
$$[K_1\oplus K_4]*[M]=[K_2\oplus K_3]*[N],$$i.e., $[M]=[K_2\oplus K_3]*[K_1\oplus K_4]^{-1}*[N].$ From above two exact sequences and by Proposition \ref{proposition equivalences of some exact sequences}, for each $j\in\mathbb{Z}$ we have that

$$\widehat{\Im(d^j)}=(\widehat{\Im(d_{K_2}^j)}+\widehat{\Im(d_{K_3}^j)})-(\widehat{\Im(d_{K_1}^j)}
+\widehat{\Im(d_{K_4}^j)}),$$
where $d_{K_i}$ denotes the differential of the complex $K_{i}$, $i=1,2,3,4$.
Therefore, by Lemma \ref{lemma decomposition of acyclic complex} we have
\begin{eqnarray*}
[M]&=&[K_{\Im(d^l),l+1}]*[K_{\Im(d^{l+1}),l+2}]*\cdots*[K_{\Im(d^{r-1}),r}]*[N]\\
&=&[K_{\Im(d^{r-1}),r}]*[K_{\Im(d^{r-2}),r-1}]*\cdots*[K_{\Im(d^{l}),1+1}]*[N].
\end{eqnarray*}

Following Proposition \ref{proposition euler form of acyclic complexes} and Proposition
\ref{proposition Euler form of stalk complexes}, we have
\begin{eqnarray*}
[M]&=&\langle[K_{\Im(d^{r-1}),r}], [K_{\Im(d^{r-2}),r-1}]\rangle\langle [K_{\Im(d^{r-1}),r}], [U_{H^{r-1}(M),r-1}]\rangle\\
&&\langle[K_{\Im(d^{r-2}),r-1}], [K_{\Im(d^{r-3}),r-2}]\rangle\langle [K_{\Im(d^{r-2}),r-1}], [U_{H^{r-2}(M),r-2}]\rangle\cdots\\
&&\langle[K_{\Im(d^{l+1}),l+2}], [K_{\Im(d^l),1+1}]\rangle\langle [K_{\Im(d^{l+1}),l+2}], [U_{H^{l+1}(M),l+1}]\rangle\\
&&\langle[K_{\Im(d^l),l+1}], [U_{H^{l}(M),l}]\rangle[K_{\Im(d^{r-1}),r}]\diamond[K_{\Im(d^{r-2}),r-1}]\diamond\cdots\diamond[K_{\Im(d^{l}),l+1}]\diamond[N]\\
&=&\langle\widehat{\Im(d^{r-1})}, \widehat{\Im(d^{r-2})}\rangle\langle\widehat{\Im(d^{r-1})}, \widehat{H^{r-1}(M)}\rangle\langle\widehat{\Im(d^{r-2})}, \widehat{\Im(d^{r-3})}\rangle\langle\widehat{\Im(d^{r-2})}, \widehat{H^{r-2}(M)}\rangle\\
&&\cdots\langle\widehat{\Im(d^{l+1})}, \widehat{\Im(d^l)}\rangle\langle\widehat{\Im(d^{l+1})}, \widehat{H^{l+1}(M)}\rangle\langle\widehat{\Im(d^l)}, \widehat{H^l(M)}\rangle\\
&&[K_{\Im(d^{r-1}),r}]\diamond[K_{\Im(d^{r-2}),r-1}]\diamond\cdots\diamond[K_{\Im(d^{l}),l+1}]\diamond[N]\\
&=&\langle \widehat{\Im(d^{r-1})},\widehat{\Ker(d^{r-1})}\rangle\langle \widehat{\Im(d^{r-2})}, \widehat{\Ker(d^{r-2})}\rangle\cdots\langle \widehat{\Im(d^{l+1})}, \widehat{\Ker(d^{l+1})}\rangle\langle \widehat{\Im(d^{l})},\widehat{\Ker(d^{l})}\rangle\\
&&[K_{\Im(d^{r-1}),r}]\diamond[K_{\Im(d^{r-2}),r-1}]\diamond\cdots\diamond[K_{\Im(d^{l}),l+1}]\diamond[N]
\end{eqnarray*}
\end{proof}

Similar to Theorem 3.9  and its proof in \cite{LuP} we can get a basis of  $(\ch(\cc^b(\ca))/I')[S^{-1}]$ as follows.
\begin{proposition}\label{proposition basis of vector space cor to modified}
$(\ch(\cc^b(\ca))/I')[S^{-1}]$ has a basis consisting of elements
$$[K_{\alpha_{r-1},r}]\diamond[K_{\alpha_{r-2},r-1}]
\diamond\cdots\diamond[K_{\alpha_{l},l+1}]
\diamond[U_{A_r,r}\oplus U_{A_{r-1},r-1}\oplus\cdots\oplus U_{A_{l},l}],$$
where $r,l\in\mathbb{Z}$, $r\geq l$, $\alpha _i\in K_0(\ca)$ and $A_{j}\in\Iso(\ca)$ for $l\leq i\leq r-1$ and $l\leq j\leq r$.
\end{proposition}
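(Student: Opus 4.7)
Following the analogous $\Z/2$-graded theorem in \cite{LuP}, the plan is to establish spanning and linear independence separately. For spanning, I will invoke Proposition~\ref{proposition multiplication decompisition of complexes} directly: every class $[M]\in\ch(\cc^b(\ca))/I'$ is, up to a nonzero rational scalar built from Euler forms, equal to a product of the claimed shape with $\alpha_i=\widehat{\Im(d^i)}$ and $A_j=H^j(M)$. A general element of the localization has the form $[M]\diamond[K]^{-1}$ for some acyclic $K$; by Lemma~\ref{lemma decomposition of acyclic complex}, $[K]^{-1}$ is a scalar multiple of a product of $[K_{X_i,i+1}]^{-1}$, which I absorb into the leftmost acyclic factors of the decomposition of $[M]$ via the definition $K_{\alpha,m}=\frac{1}{\langle\alpha,\widehat B\rangle}[K_{A,m}]\diamond[K_{B,m}]^{-1}$. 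This lets $\alpha_i$ range over all of $K_0(\ca)$ rather than just effective classes, so the proposed family $\mathcal{B}$ spans.

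The main obstacle is linear independence. The strategy is to construct a $\Q$-linear map
$$\Phi:(\ch(\cc^b(\ca))/I')[S^{-1}]\longrightarrow\mathcal{V},$$
where $\mathcal{V}$ is the free $\Q$-vector space on the index set of $\mathcal{B}$, that sends each member of $\mathcal{B}$ to its own standard basis vector. First I would define $\widetilde\Phi:\ch(\cc^b(\ca))\to\mathcal{V}$ on generators by sending $[M]$ to the basis vector indexed by $\bigl(l,r;\widehat{\Im(d^{r-1})},\ldots,\widehat{\Im(d^l)};H^r(M),\ldots,H^l(M)\bigr)$, scaled by the reciprocal of the Euler-form prefactor of Proposition~\ref{proposition multiplication decompisition of complexes}.

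To see that $\widetilde\Phi$ descends to $\ch(\cc^b(\ca))/I'$, I would apply Proposition~\ref{proposition equivalences of some exact sequences}: for any short exact sequence $0\to K\to L\to M\to 0$ with $K$ acyclic, one has $\widehat{\Im(d_L^i)}=\widehat{\Im(d_K^i)}+\widehat{\Im(d_M^i)}$ and $H^i(L)\cong H^i(M)$, so the indices agree at $L$ and at $K\oplus M$, and the prefactors match after applying Propositions~\ref{proposition euler form of acyclic complexes} and~\ref{proposition Euler form of stalk complexes}. To extend $\widetilde\Phi$ to the localization, I would observe that multiplication by an acyclic $[K]$ acts on $\mathcal{V}$ by translating the $\alpha$-tuple by $(\widehat{\Im(d_K^i)})$, hence extends to a group action of $K_0(\cc^b_{ac}(\ca))$ on $\mathcal{V}$ by automorphisms, which makes $\widetilde\Phi$ factor through the localization. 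Applying the resulting $\Phi$ to a member of $\mathcal{B}$ recovers its index, so $\mathcal{B}$ is linearly independent. The central technical difficulty is the scalar bookkeeping needed to verify that $\widetilde\Phi$ is well-defined at both the quotient and the localization step.
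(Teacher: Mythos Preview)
Your proposal is correct and follows essentially the approach the paper invokes from \cite{LuP}: spanning via Proposition~\ref{proposition multiplication decompisition of complexes}, and linear independence by constructing an $\A_{ac}$-equivariant map to a free module indexed by the cohomological and image data, then passing to the localization via invertibility of the acyclic action. The only imprecision---that the $\A_{ac}$-action on $\mathcal{V}$ is not pure translation of the $\alpha$-tuple but translation twisted by Euler-form scalars---is exactly the ``scalar bookkeeping'' you already flag as the remaining work.
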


By the same proof as that of Theorem 3.12 in \cite{LuP}, we can get the following proposition.
\begin{proposition}\label{proposition iso quotient space and modified}
The natural projection $\ch(\cc^b(\ca))\rightarrow (\ch(\cc^b(\ca))/I')[S^{-1}]$ induces that
$\cm\ch(\ca)$ is isomorphic to $(\ch(\cc^b(\ca))/I')[S^{-1}]$ as $\T_{ac}(\ca)$-bimodules.
\end{proposition}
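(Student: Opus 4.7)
The plan is to exhibit mutually inverse $\T_{ac}(\ca)$-bimodule homomorphisms between the two objects. For the forward map $\Phi \colon (\ch(\cc^b(\ca))/I')[S^{-1}] \to \cm\ch(\ca)$, note that $I' \subseteq I$ as subspaces of $\ch(\cc^b(\ca))$, so the identity on $\ch(\cc^b(\ca))$ descends to a canonical surjection $\ch(\cc^b(\ca))/I' \twoheadrightarrow \ch(\cc^b(\ca))/I$. By Lemma \ref{lemma multiplcation in quotient algebra}, the Hall action of $\A_{ac}(\ca)$ on the right-hand side is given by exactly the same formula (\ref{definition of bimodule}) used to define the $\A_{ac}(\ca)$-bimodule structure on the left, so this surjection is $\A_{ac}(\ca)$-bilinear. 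Localizing at $S$ on both sides yields $\Phi$, which is surjective because localization preserves surjections.

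For the backward map I would consider the natural linear map $\pi \colon \ch(\cc^b(\ca)) \to (\ch(\cc^b(\ca))/I')[S^{-1}]$. Since the elements of $S$ are nonzero scalar multiples of classes of acyclic complexes, $\pi$ automatically sends $S$ into the invertible elements of $\T_{ac}(\ca) \subseteq (\ch(\cc^b(\ca))/I')[S^{-1}]$. The task therefore reduces to showing that $\pi$ vanishes on the two-sided ideal $I$; the universal property of $\cm\ch(\ca)=(\ch(\cc^b(\ca))/I)[S^{-1}]$ will then furnish the inverse $\Psi \colon \cm\ch(\ca) \to (\ch(\cc^b(\ca))/I')[S^{-1}]$, and $\Phi \circ \Psi$ and $\Psi \circ \Phi$ will each be the identity because both restrict to the identity on the common surjective image of $\ch(\cc^b(\ca))$.

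The core task is thus to prove $\pi([X] \diamond ([L]-[K\oplus M]) \diamond [Y]) = 0$ for every short exact sequence $K \rightarrowtail L \twoheadrightarrow M$ in $\cc^b(\ca)$ with $K$ acyclic and every $[X], [Y] \in \ch(\cc^b(\ca))$. I would expand each of $[X]\diamond [L] \diamond [Y]$ and $[X]\diamond [K\oplus M]\diamond [Y]$ as a Hall sum in $\ch(\cc^b(\ca))$ and then apply Proposition \ref{proposition multiplication decompisition of complexes} term by term to rewrite each summand as a $\T_{ac}(\ca)$-scalar multiple of a basis element from Proposition \ref{proposition basis of vector space cor to modified}. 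By Proposition \ref{proposition equivalences of some exact sequences}, the homologies of $L$ and of $K \oplus M$ coincide and $\widehat{\Im(d_L^i)} = \widehat{\Im(d_{K\oplus M}^i)}$ in $K_0(\ca)$, and the element $K_{\alpha,m}$ depends only on $\alpha$. A bijection between the middle terms of the extensions appearing on the two sides, obtained by pulling back and pushing out along $K \rightarrowtail L \twoheadrightarrow M$, combined with Lemma \ref{lemma hall multiplicatoin of acyclic complexes} to absorb the resulting acyclic direct summands into $\T_{ac}(\ca)$-scalars, then shows that the two normal-form expansions agree coefficient by coefficient.

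The main obstacle is the scalar bookkeeping in this last step: simultaneously matching the Hall structure constants, the factors $\langle \widehat{\Im(d^i)}, \widehat{\Ker(d^i)}\rangle$ produced by Proposition \ref{proposition multiplication decompisition of complexes}, and the Euler-form twists built into multiplication on $\T_{ac}(\ca)$. The required identity reduces, via Propositions \ref{proposition euler form of acyclic complexes} and \ref{proposition Euler form of stalk complexes} together with the bilinearity of the Euler form on $K_0(\ca)$, to a finite multilinear identity in the Grothendieck group, and the computation parallels the proof of Theorem 3.12 in \cite{LuP}.
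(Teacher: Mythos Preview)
Your proposal is essentially correct and follows the same route as the paper, which simply defers to the proof of Theorem~3.12 in \cite{LuP}: construct the obvious surjection $\Phi$, then produce its inverse by showing that the projection $\pi$ annihilates the full ideal $I$ via the normal-form expansion of Proposition~\ref{proposition multiplication decompisition of complexes} together with Proposition~\ref{proposition equivalences of some exact sequences}. One small point to make precise: the ``universal property'' you invoke for $\Psi$ must be the $\A_{ac}(\ca)$-bimodule extension along $\A_{ac}(\ca)\to\T_{ac}(\ca)$, not the ring-theoretic universal property of Ore localization, since $(\ch(\cc^b(\ca))/I')[S^{-1}]$ is only defined as a $\T_{ac}(\ca)$-bimodule.
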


For the modified Ringel-Hall algebra we have the following consequence.
\begin{theorem}\label{theorem basis of modified}
$(1)$ Let $M=(M^i,d^i)\in\cc^b(\ca)$. If the rightmost and the leftmost nonzero components of $M$ are $M^r$ and $M^l$ respectively. Then in $\cm\ch(\ca)$ we have
\begin{eqnarray*}
[M]&=&\langle \widehat{\Im(d^{r-1})},\widehat{\Ker(d^{r-1})}\rangle\langle \widehat{\Im(d^{r-2})}, \widehat{\Ker(d^{r-2})}\rangle\cdots
\langle \widehat{\Im(d^{l})},\widehat{\Ker(d^{l})}\rangle\\
&&[K_{\Im(d^{r-1}),r}]\diamond[K_{\Im(d^{r-2}),r-1}]\diamond\cdots\diamond[K_{\Im(d^{l}),l+1}]\diamond\\
&&[U_{H^r(M),r}]\diamond [U_{H^{r-1}(M),r-1}]\diamond\cdots\diamond[U_{H^{l}(M),l}].
\end{eqnarray*}

$(2)$
$\cm\ch(\ca)$ has a basis consisting of elements
$$[K_{\alpha_{r-1},r}]\diamond[K_{\alpha_{r-2},r-1}]
\diamond\cdots\diamond[K_{\alpha_{l},l+1}]
\diamond[U_{A_r,r}]\diamond[U_{A_{r-1},r-1}]\diamond\cdots\diamond[U_{A_{l},l}],$$
where $r,l\in\mathbb{Z}$, $r\geq l$, $\alpha _i\in K_0(\ca)$ and $A_{j}\in\Iso(\ca)$ for $l\leq i\leq r-1$ and $l\leq j\leq r$.
\end{theorem}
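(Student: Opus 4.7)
The plan is to deduce both parts from Propositions~\ref{proposition multiplication decompisition of complexes}, \ref{proposition basis of vector space cor to modified} and \ref{proposition iso quotient space and modified}, by promoting the single direct-sum class $[U_{H^r(M),r}\oplus\cdots\oplus U_{H^l(M),l}]$ occurring in those propositions to an iterated $\diamond$-product of stalk classes inside the algebra $\cm\ch(\ca)$.

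First, I would establish the auxiliary identity
\[
[U_{A,m}]\diamond[U_{B,n}]=[U_{A,m}\oplus U_{B,n}]\qquad\text{for all }A,B\in\ca\text{ and all integers }m>n,
\]
working in $\cm\ch(\ca)$. This follows immediately from Proposition~\ref{proposition homological finiteness of stalk complex}(2), which gives the vanishing of both $\Hom_{\cc^b(\ca)}(U_{A,m},U_{B,n})$ and $\Ext^1_{\cc^b(\ca)}(U_{A,m},U_{B,n})$: the defining sum of the Hall product has exactly one nonzero summand, the split extension with middle term $U_{A,m}\oplus U_{B,n}$, and both the numerator $|\Ext^1_{\cc^b(\ca)}(U_{A,m},U_{B,n})_{U_{A,m}\oplus U_{B,n}}|$ and the denominator $|\Hom_{\cc^b(\ca)}(U_{A,m},U_{B,n})|$ equal $1$, so no Euler-form twist appears. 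Iterating this along strictly decreasing degrees $r>r-1>\cdots>l$, and using that the relevant $\Hom$ and $\Ext^1$ groups still vanish because they split over direct sums in the first variable, a short induction on the number of stalks yields
\[
[U_{H^r(M),r}]\diamond[U_{H^{r-1}(M),r-1}]\diamond\cdots\diamond[U_{H^l(M),l}]=[U_{H^r(M),r}\oplus U_{H^{r-1}(M),r-1}\oplus\cdots\oplus U_{H^l(M),l}].
\]

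Part (1) is then immediate: I transport the formula of Proposition~\ref{proposition multiplication decompisition of complexes} along the $\T_{ac}(\ca)$-bimodule isomorphism $\cm\ch(\ca)\cong(\ch(\cc^b(\ca))/I')[S^{-1}]$ provided by Proposition~\ref{proposition iso quotient space and modified}, and then substitute the iterated product expression above for the direct-sum class. For part (2), I would push the basis of Proposition~\ref{proposition basis of vector space cor to modified} through the same isomorphism and apply the auxiliary identity in reverse to replace each occurrence of $[U_{A_r,r}\oplus U_{A_{r-1},r-1}\oplus\cdots\oplus U_{A_l,l}]$ by $[U_{A_r,r}]\diamond[U_{A_{r-1},r-1}]\diamond\cdots\diamond[U_{A_l,l}]$; since this is a bijective relabeling of basis vectors, spanning and linear independence are preserved.

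The only genuinely new point to verify is the auxiliary stalk-product identity, and it is a one-line consequence of Proposition~\ref{proposition homological finiteness of stalk complex}(2); everything else in the argument is a formal application of the structural results already proved in Section~3, so I do not anticipate any serious obstacle.
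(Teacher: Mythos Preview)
Your proposal is correct and follows essentially the same strategy as the paper: the paper also reduces everything to the single observation that when the two factors have disjoint degree supports ordered the right way, both $\Hom$ and $\Ext^1$ vanish so $[X]\diamond[Y]=[X\oplus Y]$, and then invokes Propositions~\ref{proposition multiplication decompisition of complexes}, \ref{proposition basis of vector space cor to modified} and \ref{proposition iso quotient space and modified}. The only cosmetic difference is that the paper states the vanishing for arbitrary complexes $X,Y$ with $X^i=0$ for $i<s$ and $Y^j=0$ for $j>t$ ($s>t$), whereas you deduce the iterated case from the stalk case via additivity of $\Hom$ and $\Ext^1$ in the first variable; both justifications are immediate.
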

\begin{proof}

For any objects $X, Y\in\cc^b(\ca)$ and $s,t\in\mathbb{Z}$ such that $s>t$, if the components $X^i=0$ and $Y^j=0$ for all $i<s$ and $j>t$, then one can easily see that $\mathrm{Hom}_{\cc^b(\ca)}(X, Y)=0$ and $\mathrm{Ext}^1_{\cc^b(\ca)}(X, Y)=0$, and so by the definition of modified Ringel-Hall algebra, we have  $[X]\diamond[Y]=[X\oplus Y]$. As a result, we have in $\cm\ch(\ca)$
$$[U_{H^r(M),r}\oplus U_{H^{r-1}(M),r-1}\oplus\cdots\oplus U_{H^{l}(M),l}]=[U_{H^r(M),r}]\diamond [U_{H^{r-1}(M),r-1}]\diamond\cdots\diamond[U_{H^{l}(M),l}].$$
Therefore (1) and (2) are obtained immediately from Proposition \ref{proposition multiplication decompisition of complexes},  Proposition \ref{proposition basis of vector space cor to modified} and Proposition \ref{proposition iso quotient space and modified}.
\end{proof}
\subsection{The generators and relations of modified Ringel-Hall algebras}In the following, we  describe the modified Ringel-Hall algebra by its generators and relations.

Given four objects $A, B, M$ and $N$ of $\ca$, let $V(M, B, A, N)$ be the subset of $\Hom(M,B)\times\Hom(B,A)\times\Hom(A,N)$ consisting of exact sequences $0\rightarrow M \rightarrow B\rightarrow A\rightarrow N\rightarrow 0$. The set $V(M, B, A, N)$ is finite and we define a rational number
$$\gamma_{AB}^{MN}:=\frac{|V(M, B, A, N)|}{a_Aa_B},$$
where $a_A=|\aut(A)|$ and $a_B=|\aut(B)|$.
\begin{lemma}\label{lemma multiplication of stalk complex }
Let $A, B\in\Iso(\ca)$, $n\in\mathbb{Z}$. In $\cm\ch(\ca)$, we have
$$[U_{B,n}]\diamond[U_{A,n+1}]=\sum_{M,N\in\Iso(\mathcal{A})}\gamma_{AB}^{MN}\frac{a_Aa_B}{a_Ma_N}\langle \widehat{B}-\widehat{M},\widehat{M}\rangle[K_{\widehat{B}-\widehat{M},n+1}]\diamond[U_{N,n+1}]\diamond[U_{M,n}].$$
\end{lemma}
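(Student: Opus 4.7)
The plan is to expand $[U_{B,n}]\diamond[U_{A,n+1}]$ directly from the Hall product in $\ch(\cc^b(\ca))/I$, rewrite every isomorphism class appearing in the sum via Theorem \ref{theorem basis of modified}(1), and finally reorganize the sum by isomorphism classes of kernel and cokernel of the differentials that appear.

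First I would record the two Hom/Ext computations that make the product collapse. Since $n+1>n$, Proposition \ref{proposition homological finiteness of stalk complex}(3) gives $\Hom_{\cc^b(\ca)}(U_{B,n},U_{A,n+1})=0$ (so the denominator in the Hall product is $1$) and $\Ext^1_{\cc^b(\ca)}(U_{B,n},U_{A,n+1})\cong\Hom_{\ca}(B,A)$. Under this identification each $\alpha\in\Hom_\ca(B,A)$ corresponds to the two-term extension with middle term the complex $X_\alpha$ concentrated in degrees $n,n+1$ with components $B,A$ and differential $\alpha$; distinct $\alpha$'s produce distinct Yoneda classes (the equivalence of two such extensions would force the identity on both stalks, forcing $\alpha=\alpha'$). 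Hence
$$[U_{B,n}]\diamond[U_{A,n+1}]=\sum_{\alpha\in\Hom_\ca(B,A)}[X_\alpha].$$

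Next, for each $\alpha$ I would apply Theorem \ref{theorem basis of modified}(1) to $X_\alpha$ with $l=n$ and $r=n+1$. Writing $M_\alpha:=\Ker(\alpha)$ and $N_\alpha:=\Coker(\alpha)$, one has $H^n(X_\alpha)=M_\alpha$, $H^{n+1}(X_\alpha)=N_\alpha$, $\Ker(d^n)=M_\alpha$, $\Im(d^n)=\Im(\alpha)$ and $\widehat{\Im(\alpha)}=\widehat{B}-\widehat{M_\alpha}$ in $K_0(\ca)$. The product over $i=l,\dots,r-1$ in Theorem \ref{theorem basis of modified}(1) has a single factor, so
$$[X_\alpha]=\langle \widehat{B}-\widehat{M_\alpha},\widehat{M_\alpha}\rangle\,[K_{\widehat{B}-\widehat{M_\alpha},\,n+1}]\diamond[U_{N_\alpha,n+1}]\diamond[U_{M_\alpha,n}].$$

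Finally, I would collect the $\alpha$'s by the isomorphism classes $[M]=[M_\alpha]$ and $[N]=[N_\alpha]$. Specifying an element of $V(M,B,A,N)$ is the same as choosing a morphism $\alpha:B\to A$ with $\Ker(\alpha)\cong M$ and $\Coker(\alpha)\cong N$ together with identifications $M\xrightarrow{\sim}\Ker(\alpha)$ and $\Coker(\alpha)\xrightarrow{\sim}N$; counting these identifications gives
$$|\{\alpha\in\Hom_\ca(B,A):\Ker(\alpha)\cong M,\ \Coker(\alpha)\cong N\}|=\frac{|V(M,B,A,N)|}{a_Ma_N}=\gamma_{AB}^{MN}\frac{a_Aa_B}{a_Ma_N},$$
and substituting this into the sum yields the stated formula. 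The main obstacle is not conceptual but of bookkeeping type: one must carefully justify the $a_Ma_N$ factor in the above bijection and make sure each extension of $U_{A,n+1}$ by $U_{B,n}$ is accounted for exactly once, and one must be comfortable writing $[K_{\widehat{B}-\widehat{M},n+1}]$ using the notation introduced after Lemma \ref{lemma hall multiplicatoin of acyclic complexes} (which equals $[K_{\Im(\alpha),n+1}]$ since $\widehat{\Im(\alpha)}=\widehat{B}-\widehat{M_\alpha}$).
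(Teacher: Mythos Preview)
Your proposal is correct and follows essentially the same approach as the paper's own proof. The paper likewise observes $\Hom_{\cc^b(\ca)}(U_{B,n},U_{A,n+1})=0$, identifies each extension with a morphism $g\colon B\to A$ (via the canonical form of the middle term), uses the decomposition result (the paper cites Proposition~\ref{proposition multiplication decompisition of complexes} rather than Theorem~\ref{theorem basis of modified}(1), but these coincide here), and obtains the same count $|V(M,B,A,N)|/(a_Ma_N)$ for the number of $g$ with prescribed kernel and cokernel.
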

\begin{proof}
Firstly, we claim that for the given objects $M, B, A, N\in\ca$, the set $$S=\{g\in\Hom(B, A)|\ \Ker(g)\cong M, \Coker(g)\cong N\}$$ is isomorphic to the set $$S'=\bigsqcup_{{\scriptsize\begin{array}{c}L\in\Iso(C^b(\ca))\\H^n(L)\cong M\\ H^{n+1}(L)\cong N\end{array}}}\Ext^1_{\cc^b(\ca)}(U_{B,n},U_{A,n+1})_{L}.$$
In fact, for any $\xi\in\Ext^1_{\cc^b(\ca)}(U_{B,n},U_{A,n+1})$, one can write  $\xi$ in the form $$0\rightarrow U_{A,n+1}\xrightarrow{\alpha} L\xrightarrow{\beta} U_{B,n}\rightarrow 0,$$
where $L=\cdots\rightarrow 0\rightarrow B\xrightarrow{g}A\rightarrow 0\rightarrow\cdots$ with $B$ sitting in the degree $n$ and $A$ in the degree $n+1$, $\alpha=(\cdots,0, 1_A,0,\cdots), \beta=(\cdots,0,1_B,0,\cdots)$. Therefore, for any $\xi,\xi'\in\Ext^1_{\cc^b(\ca)}(U_{B,n},U_{A,n+1})$, assume that $\xi=0\rightarrow U_{A,n+1}\xrightarrow{\alpha} L\xrightarrow{\beta} U_{B,n}\rightarrow 0$ and $\xi'=0\rightarrow U_{A,n+1}\xrightarrow{\alpha} L'\xrightarrow{\beta} U_{B,n}\rightarrow 0$, where $L=\cdots\rightarrow 0\rightarrow B\xrightarrow{g}A\rightarrow 0\rightarrow\cdots$ and $L'=\cdots\rightarrow 0\rightarrow B\xrightarrow{g'}A\rightarrow 0\rightarrow\cdots$. Clearly we have $[\xi]=[\xi']$ if and only if $g=g'$. Thus $S$ is isomorphic to $S'$.

It is easy to see that $|S|=\frac{|V(M,B,A,N)|}{a_Ma_N}$ and hence
$$\sum_{\scriptsize{\begin{array}{c}L\in\Iso(C^b(\ca))\\H^n(L)\cong M\\ H^{n+1}(L)\cong N\end{array}}}|\Ext^1_{\cc^b(\ca)}(U_{B,n},U_{A,n+1})_{L}|=\frac{|V(M,B,A,N)|}{a_Ma_N}$$

By Proposition \ref{proposition multiplication decompisition of complexes}, in $\cm\ch(\ca)$ we  have

$$[L]=\langle\widehat{B}-\widehat{M},\widehat{M}\rangle[K_{\widehat{B}-\widehat{M},n+1}]\diamond[U_{N,n+1}]\diamond[ U_{M,n}],$$
provided that $L=\cdots\rightarrow 0\rightarrow B\xrightarrow{g}A\rightarrow 0\rightarrow\cdots$, with $B$ sitting in the degree $n$ and $A$ in the degree $n+1$, satisfies $H^n(L)\cong M, H^{n+1}(L)\cong N$.

Note that $\Hom_{\cc^b(\ca)}(U_{B,n},U_{A,n+1})=0$. Therefore, in $\cm\ch(\ca)$ we have
\begin{eqnarray*}
& &[U_{B,n}]\diamond[U_{A,n+1}]\\
&=&\sum_{L\in\Iso(\cc^b(\ca))}|\Ext^1_{\cc^b(\ca)}(U_{B,n},U_{A,n+1})_{L}|[L]\\
&=&\sum_{M,N\in\Iso(\ca)}\left(\sum_{\scriptsize{
\begin{array}{c}L\in\Iso(C^b(\ca))\\H^n(L)\cong M\\ H^{n+1}(L)\cong N\end{array}}}|\Ext^1_{\cc^b(\ca)}(U_{B,n},U_{A,n+1})_{L}|\right)\\
& & \langle \widehat{B}-\widehat{M},\widehat{M}\rangle[K_{\widehat{B}-\widehat{M},n+1}]\diamond[U_{N,n+1}]\diamond[U_{M,n}]\\
&=&\sum\limits_{M,N\in\Iso(\mathcal{A})}\gamma_{AB}^{MN}\frac{a_Aa_B}{a_Ma_N}\langle \widehat{B}-\widehat{M},\widehat{M}\rangle[K_{\widehat{B}-\widehat{M},n+1}]\diamond[U_{N,n+1}]\diamond[U_{M,n}].
\end{eqnarray*}
\end{proof}

\begin{proposition}\label{proposition relations}
The modified Ringel-Hall algebra $\cm\ch(\ca)$ is generated by the set
$$\{U_{A,n},K_{\alpha,n}|\ A\in\Iso(\ca), \alpha\in K_0(\ca), n\in\mathbb{Z}\}$$ with the defining relations (\ref{relation in modified 1}) - (\ref{relation in modified 10}) as follows, where we write $U_{A,n}:=[U_{A,n}]$.
\begin{eqnarray}
U_{A,n}\diamond U_{B,n}&=&\sum\limits_{C\in\Iso(\ca)}\frac{|\Ext^1_{\ca}(A,B)_C|}{|\Hom_{\ca}(A,B)|}U_{C,n},\label{relation in modified 1}\\
K_{\alpha,n}\diamond U_{A,n}&=&\langle\widehat{A},\alpha\rangle U_{A,n}\diamond K_{\alpha,n},\\
K_{\alpha, n}\diamond K_{\beta, n}&=&\frac{1}{\langle\alpha, \beta\rangle} K_{\alpha+\beta, n},
\end{eqnarray}
\begin{eqnarray}
U_{A,n}\diamond K_{\alpha,n+1}&=&\langle\alpha, \widehat{A}\rangle K_{\alpha,n+1}\diamond U_{A,n},\\
K_{\alpha,n}\diamond U_{A,n+1}&=&U_{A,n+1}\diamond K_{\alpha,n},\\
K_{\alpha,n}\diamond K_{\beta,n+1}&=&\langle\beta, \alpha\rangle K_{\beta,n+1}\diamond K_{\alpha,n},
\end{eqnarray}
\begin{eqnarray}
&U_{B,n}\diamond U_{A,n+1}=\sum\limits_{M,N\in\Iso(\ca)}\gamma_{AB}^{MN}\frac{a_Aa_B}{a_Ma_N}\langle\widehat{B}-\widehat{M}, \widehat{M}\rangle K_{\widehat{B}-\widehat{M},n+1}\diamond U_{N,n+1}\diamond U_{M,n},
\end{eqnarray}
and if $|m-n|\geq 2$, then
\begin{eqnarray}
U_{A,m}\diamond U_{B,n} &=& U_{B,n}\diamond U_{A,m},\\
K_{\alpha,m}\diamond U_{B,n} &=&U_{B,n}\diamond K_{\alpha,m},\\
K_{\alpha,m}\diamond K_{\beta,n}&=&K_{\beta,n}\diamond K_{\alpha,m}.\label{relation in modified 10}
\end{eqnarray}
\end{proposition}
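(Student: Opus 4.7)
The plan is to prove the proposition in two stages: first verify that all ten relations hold in $\cm\ch(\ca)$, and then show that they are sufficient by reducing the free algebra on the given symbols to the explicit basis from Theorem \ref{theorem basis of modified}(2).

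For the first stage, most relations reduce to routine calculations using the results already established. Relation (\ref{relation in modified 1}) is immediate since in $\cc^b(\ca)$ the full subcategory of stalk complexes concentrated in degree $n$ is equivalent to $\ca$, so the Hall product in $\cm\ch(\ca)$ restricted to this subcategory agrees with the classical Ringel-Hall product in $\ch(\ca)$. Relations (\ref{relation in modified 1}+1), (\ref{relation in modified 1}+2), (\ref{relation in modified 1}+3), (\ref{relation in modified 1}+4), (\ref{relation in modified 1}+5) follow from Lemma \ref{lemma hall multiplicatoin of acyclic complexes} combined with the explicit Euler form computations of Propositions \ref{proposition euler form of acyclic complexes} and \ref{proposition Euler form of stalk complexes} (rewriting everything via the definition $K_{\alpha,n}=\frac{1}{\langle\alpha,\widehat{B}\rangle}[K_{A,n}]\diamond[K_{B,n}]^{-1}$ for $\alpha=\widehat{A}-\widehat{B}$). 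Relations (\ref{relation in modified 1}+7), (\ref{relation in modified 1}+8), (\ref{relation in modified 1}+9) follow from Proposition \ref{proposition homological finiteness of stalk complex}(2), which gives $\Ext^p_{\cc^b(\ca)}(U_{A,m},U_{B,n})=0$ for all $p\geq 0$ when $m>n$, and its combination with Lemma \ref{lemma hall multiplicatoin of acyclic complexes} when $K$'s are involved (after again rewriting $K_{\alpha,n}$). The truly nontrivial relation (\ref{relation in modified 1}+6) is precisely Lemma \ref{lemma multiplication of stalk complex }.

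For the second stage, let $\cb$ be the $\Q$-algebra freely generated by the symbols $\{U_{A,n}, K_{\alpha,n}\}$ subject to relations (\ref{relation in modified 1})--(\ref{relation in modified 10}). The relations hold in $\cm\ch(\ca)$, so there is a well-defined algebra homomorphism $\phi:\cb\rightarrow\cm\ch(\ca)$ sending symbols to their namesakes. By Theorem \ref{theorem basis of modified}(2) this map is surjective. To prove injectivity, I would establish a straightening algorithm that puts any monomial in the generators of $\cb$ into the normal form
$$K_{\alpha_{r-1},r}\diamond K_{\alpha_{r-2},r-1}\diamond\cdots\diamond K_{\alpha_l,l+1}\diamond U_{A_r,r}\diamond U_{A_{r-1},r-1}\diamond\cdots\diamond U_{A_l,l}.$$
The algorithm uses relations (\ref{relation in modified 1}+7)--(\ref{relation in modified 10}) to separate far-apart degrees, relation (\ref{relation in modified 1}+6) to rewrite any occurrence of $U_{B,n}\diamond U_{A,n+1}$ in terms of monomials whose non-$K$ part is of the form $U_{*,n+1}\diamond U_{*,n}$, relations (\ref{relation in modified 1}+1), (\ref{relation in modified 1}+3)--(\ref{relation in modified 1}+5) to push all $K$-generators to the left of all $U$-generators and order them appropriately, relation (\ref{relation in modified 1}) to collapse consecutive $U$'s of the same degree into a linear combination of single $U$'s, and relations (\ref{relation in modified 1}+2) to merge consecutive $K$'s of the same degree. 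This shows that $\cb$ is spanned as a $\Q$-vector space by the normal-form monomials, which $\phi$ sends bijectively to the basis of $\cm\ch(\ca)$ in Theorem \ref{theorem basis of modified}(2); hence $\phi$ is an isomorphism.

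The main obstacle is showing termination and confluence of the straightening procedure. The delicate point is the application of relation (\ref{relation in modified 1}+6): each use replaces $U_{B,n}\diamond U_{A,n+1}$ with a sum whose summands contain an extra $K$-factor in degree $n+1$ and a reordered pair $U_{N,n+1}\diamond U_{M,n}$; one must argue that the resulting monomials are simpler in a well-founded ordering. I would introduce a complexity measure on monomials, for example a triple consisting of (total width of the degree range, number of degree-inversions in the sequence of $U$-generators, number of $K$-generators appearing to the right of some $U$-generator in the same or smaller degree) ordered lexicographically, and check that every single rewrite strictly decreases this measure. Once termination is established, uniqueness of the normal form is forced by the fact that $\phi$ maps normal-form monomials injectively into the basis of $\cm\ch(\ca)$, so no further confluence check is needed.
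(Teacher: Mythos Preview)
Your proposal is correct and follows essentially the same approach as the paper. The paper's proof is extremely terse: it cites Theorem \ref{theorem basis of modified} for the generating set, says the relations follow ``from the definition of modified Ringel-Hall algebra and Lemma \ref{lemma multiplication of stalk complex }'', and then simply asserts that ``using the basis of $\cm\ch(\ca)$ in Theorem \ref{theorem basis of modified} one can see that these relations are the defining relations.'' Your write-up makes explicit the straightening argument that the paper leaves entirely to the reader; in particular, your observation that injectivity of $\phi$ follows automatically once the normal-form monomials span $\cb$ (since $\phi$ maps them to the known basis) is exactly the content hidden behind the paper's phrase ``one can see.'' One minor quibble: for relation (\ref{relation in modified 1}+7) you need both directions of vanishing of $\Hom$ and $\Ext^1$, so Proposition \ref{proposition homological finiteness of stalk complex}(3) is also needed (for $|m-n|\geq 2$ the relevant degrees in part (3) are $\geq 2$, so they vanish as required).
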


\begin{proof}
By Theorem \ref{theorem basis of modified} we know that the set $\{U_{A,n},K_{\alpha,n}|A\in\Iso(\ca), \alpha\in K_0(\ca), n\in\mathbb{Z}\}$ is a generating set of $\cm\ch(\ca)$. From the definition of modified Ringel-Hall algebra and Lemma \ref{lemma multiplication of stalk complex }
 one can easily get that these generators satisfy the relations (2)-(11).  Using the basis of $\cm\ch(\ca)$ in Theorem \ref{theorem basis of modified} one can see that these relations are the defining relations.
\end{proof}
\section{A new proof of  Green's formula}

For any objects $A, B, C\in\ca$, we use the symbol $g^C_{AB}$ to denote the number of subobjects $B'$ of $C$ such that $B'\cong B$ and $C/B'\cong A$, called a Hall number. Then one have the following homological formula (see \cite{Rie,P}) $$g^C_{AB}=\frac{|\Ext_{\ca}^1(A, B)_C|}{|\Hom_{\ca}(A, B)|}\frac{a_C}{a_Aa_B}.$$

The following is  Green's formula.
\begin{theorem}[\cite{Gr},Theorem 2]
Let $A, B, A', B'$ be fixed objects of $\ca$. Then there holds
\begin{eqnarray*}
&&a_Aa_Ba_{A'}a_{B'}\sum\limits_{C\in\Iso(\ca)}g_{AB}^{C}g_{A'B'}^{C}\frac{1}{a_C}\\
&=&\sum\limits_{X,Y,X',Y'\in\Iso(\ca)}\frac{|\Ext_{\ca}^1(X,Y')|}{|\Hom_{\ca}(X,Y')|}
g_{XX'}^{A}g_{YY'}^{B}g_{XY}^{A'}g_{X'Y'}^{B'}a_Xa_Ya_{X'}a_{Y'}.
\end{eqnarray*}
\end{theorem}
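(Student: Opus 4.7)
The plan is to derive Green's formula from the associativity of the Hall product $\diamond$ in $\cm\ch(\ca)$ applied to the four-fold product
$$\mathcal{X} := [U_{A,n}]\diamond[U_{B,n}]\diamond[U_{A',n+1}]\diamond[U_{B',n+1}]$$
for any fixed $n\in\Z$. By Theorem \ref{theorem basis of modified}, $\mathcal{X}$ admits a unique expansion in the basis $\{[K_{\alpha,n+1}]\diamond[U_{N,n+1}]\diamond[U_{M,n}]\}$. I will compute the coefficient of the simplest such basis vector, namely $[K_{\widehat{A}+\widehat{B},n+1}]$ (i.e., the one with $M=N=0$), using two different parenthesizations of $\mathcal{X}$; equating the two expressions will reproduce Green's formula.

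For the first parenthesization, $([U_{A,n}]\diamond[U_{B,n}])\diamond([U_{A',n+1}]\diamond[U_{B',n+1}])$, each inner factor reduces to a standard Ringel-Hall product in a single degree by Proposition \ref{proposition homological finiteness of stalk complex}(1) and produces Hall numbers $g^C_{AB}$ and $g^{C'}_{A'B'}$. Applying Lemma \ref{lemma multiplication of stalk complex } to the remaining cross-degree product $[U_{C,n}]\diamond[U_{C',n+1}]$ and selecting the coefficient of $[K_{\widehat{A}+\widehat{B},n+1}]$ forces $M=N=0$ in that lemma's formula; since the only four-term exact sequence with both outer terms zero is an isomorphism, $\gamma^{00}_{C'C}=\delta_{[C],[C']}/a_C$ collapses the double sum over $(C,C')$ to a diagonal sum, producing exactly $a_Aa_Ba_{A'}a_{B'}\sum_C g^C_{AB}g^C_{A'B'}/a_C$, the left-hand side of Green's formula. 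For the second parenthesization, $[U_{A,n}]\diamond([U_{B,n}]\diamond[U_{A',n+1}])\diamond[U_{B',n+1}]$, I first apply Lemma \ref{lemma multiplication of stalk complex } to the middle pair (introducing indices $(Y',X)$ via the four-term sequence $0\to Y'\to B\to A'\to X\to 0$, whose image I name $Y$), then use Proposition \ref{proposition relations} to commute $[U_{A,n}]$ past the resulting $[K_{\widehat{Y},n+1}]$, then apply Lemma \ref{lemma multiplication of stalk complex } to $[U_{A,n}]\diamond[U_{X,n+1}]$ (introducing $X'$ via $0\to X'\to A\to X\to 0$ once the basis restriction forces the cokernel index to vanish), collapse the two adjacent degree-$n$ stalks $[U_{X',n}]\diamond[U_{Y',n}]$ via the standard Hall product in $\ca$ (producing $g^P_{X'Y'}$; the next step forces $P\cong B'$), and finish with a third application of Lemma \ref{lemma multiplication of stalk complex } to $[U_{P,n}]\diamond[U_{B',n+1}]$. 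Factoring each $\gamma^{MN}_{AB}$ through its image as $\sum_C a_Ca_Ma_N g^B_{CM}g^A_{NC}/(a_Aa_B)$ converts the accumulated $\gamma$'s into the four Hall numbers $g^A_{XX'}g^B_{YY'}g^{A'}_{XY}g^{B'}_{X'Y'}$ of Green's right-hand side.

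The main obstacle will be to verify that the accumulated Euler-form twists collapse precisely to the factor $\langle\widehat{X},\widehat{Y'}\rangle^{-1}=|\Ext^1_\ca(X,Y')|/|\Hom_\ca(X,Y')|$ decorating Green's right-hand side. Each of the three applications of Lemma \ref{lemma multiplication of stalk complex }, each commutation $[U_{A,n}]\diamond[K_{\alpha,n+1}]=\langle\alpha,\widehat{A}\rangle[K_{\alpha,n+1}]\diamond[U_{A,n}]$ from Proposition \ref{proposition relations}, and each torus product $[K_\beta]\diamond[K_{\beta'}]=\langle\beta,\beta'\rangle^{-1}[K_{\beta+\beta'}]$ contributes its own bilinear factor, and these scattered contributions must telescope down to a single $\langle\widehat{X},\widehat{Y'}\rangle^{-1}$. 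Bilinearity of $\langle-,-\rangle$ combined with the additive identities $\widehat{A}=\widehat{X}+\widehat{X'}$, $\widehat{B}=\widehat{Y}+\widehat{Y'}$, $\widehat{A'}=\widehat{X}+\widehat{Y}$, $\widehat{B'}=\widehat{X'}+\widehat{Y'}$ imposed by the four short exact sequences makes this cancellation systematic: substituting the additive identities into every $\langle\widehat{Y},\widehat{A}\rangle$- and $\langle\widehat{X},\widehat{B'}\rangle$-type factor and matching against the torus twists leaves only $\langle\widehat{X},\widehat{Y'}\rangle^{-1}$, as required.
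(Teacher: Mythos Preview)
Your proposal is correct, but it takes a genuinely different route from the paper. The paper works with the \emph{triple} product $[U_{A,n}]\diamond[U_{B,n}]\diamond[U_{A',n+1}]$ rather than your quadruple one: it expands this in the two parenthesizations, writes both sides in the basis of Theorem~\ref{theorem basis of modified}, and obtains an identity (the paper's equation~(\ref{equ coefficient})) indexed by pairs $(B',N)$, which it then specializes to $N=0$. Thus in the paper $B'$ enters as a \emph{coefficient index}, not as a fourth multiplicative factor, and only two applications of Lemma~\ref{lemma multiplication of stalk complex } are needed on the harder side, versus your three. Your approach is a bit heavier in Euler-form bookkeeping but has its own merits: the four objects $A,B,A',B'$ play symmetric roles as factors, and you extract the coefficient of a single distinguished basis element (the ``constant term'' $[K_{\widehat{A}+\widehat{B},n+1}]$) rather than deriving a one-parameter family of identities and then specializing. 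The cancellation of Euler forms you outline in your last paragraph is exactly what happens: writing $\widehat{Y}=\widehat{B}-\widehat{Y'}$ and $\widehat{X}=\widehat{A}-\widehat{X'}$, the accumulated factor $\langle\widehat{Y},\widehat{Y'}\rangle\langle\widehat{Y},\widehat{A}\rangle\langle\widehat{X},\widehat{X'}\rangle\langle\widehat{Y},\widehat{X}\rangle^{-1}\langle\widehat{A'},\widehat{B'}\rangle^{-1}$ telescopes to $\langle\widehat{X},\widehat{Y'}\rangle^{-1}$, matching the right-hand side of Green's formula.
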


The known proofs including  the original one (see  \cite{Gr}, \cite{R5}, \cite{Sch2}) were to notice that in  Green's formula the both hand sides are related to the cardinal numbers respectively of the two sets, where one set consists of  some crosses determined by two short exact sequences and the other one consists of  some squares determined by four short exact sequences, and to find a bijection of the two sets. These proofs are fairly straightforward, however they are somewhat complicated to read. In the following we give a new proof  by using the associative multiplication of the modified Ringel-Hall algebra.

\begin{proof}
For fixed objects $A,B,A'\in\ca$ and any $n\in\mathbb{Z}$, we have
\begin{eqnarray*}
&&(U_{A,n}\diamond U_{B,n})\diamond U_{A',n+1} \\
&=&\sum\limits_{C\in\Iso(\ca)}g_{AB}^C\frac{a_Aa_B}{a_C}U_{C,n}\diamond U_{A',n+1}\\
&=&\sum\limits_{C,B',N\in\Iso(\ca)}g_{AB}^C\gamma_{A'C}^{B'N}\langle \widehat{A'}-\widehat{N},\widehat{B'}\rangle\frac{a_Aa_Ba_{A'}}{a_{B'}a_N}K_{\widehat{A'}-\widehat{N},n+1}\diamond U_{N,n+1}\diamond U_{B',n}\\
&=&\sum\limits_{B',N\in\Iso(\ca)}\left(\sum\limits_{C\in\Iso(\ca)}g_{AB}^C\gamma_{A'C}^{B'N}\langle \widehat{A'}-\widehat{N},\widehat{B'}\rangle\right)\frac{a_Aa_Ba_{A'}}{a_{B'}a_N}K_{\widehat{A'}-\widehat{N},n+1}\diamond U_{N,n+1}\diamond U_{B',n}.
\end{eqnarray*}
On the other hand, we have
\begin{eqnarray*}
&&U_{A,n}\diamond(U_{B,n}\diamond U_{A',n+1}) \\
&=&\sum\limits_{X,Y'\in\Iso(\ca)}\gamma_{A'B}^{Y'X}\frac{a_{A'}a_B}{a_Xa_{Y'}}\langle\widehat{B}-\widehat{Y'},\widehat{Y'}\rangle U_{A,n}\diamond K_{\widehat{B}-\widehat{Y'},n+1}\diamond U_{X,n+1}\diamond U_{Y',n}
\end{eqnarray*}
\begin{eqnarray*}
&=&\sum\limits_{X,Y'\in\Iso(\ca)}\gamma_{A'B}^{Y'X}\frac{a_{A'}a_B}{a_Xa_{Y'}}\langle\widehat{B}-\widehat{Y'},\widehat{Y'}+\widehat{A}\rangle  K_{\widehat{B}-\widehat{Y'},n+1}\diamond U_{A,n}\diamond U_{X,n+1}\diamond U_{Y',n}\\
&=&\sum\limits_{X,Y',N,X'\in\Iso(\ca)}\gamma_{A'B}^{Y'X}\gamma_{XA}^{X'N}\frac{a_Aa_{A'}a_B}{a_{Y'}a_{X'}a_N}\langle\widehat{B}-\widehat{Y'},\widehat{Y'}+\widehat{A}\rangle\langle \widehat{A}-\widehat{X'},\widehat{X'}\rangle\\
 &&\ \ \ K_{\widehat{B}-\widehat{Y'},n+1}\diamond K_{\widehat{A}-\widehat{X'},n+1}\diamond U_{N,n+1}\diamond U_{X',n}\diamond U_{Y',n}\\
&=&\sum\limits_{X,Y',N,X'\in\Iso(\ca)}\gamma_{A'B}^{Y'X}\gamma_{XA}^{X'N}\frac{a_Aa_{A'}a_B}{a_{Y'}a_{X'}a_N}\frac{\langle \widehat{A}-\widehat{X'},\widehat{X'}\rangle}{\langle\widehat{Y'}-\widehat{B},\widehat{Y'}+\widehat{X'}\rangle}\\
 &&\ \ \ K_{\widehat{A'}-\widehat{N},n+1}\diamond U_{N,n+1}\diamond U_{X',n}\diamond U_{Y',n}\\
&=&\sum\limits_{X,Y',N,X',B'\in\Iso(\ca)}\gamma_{A'B}^{Y'X}\gamma_{XA}^{X'N}g_{X'Y'}^{B'}\frac{a_Aa_{A'}a_B}{a_Na_{B'}}\frac{\langle \widehat{A}-\widehat{X'},\widehat{X'}\rangle}{\langle\widehat{Y'}-\widehat{B},\widehat{Y'}+\widehat{X'}\rangle} \\
  &&\ \ \ K_{\widehat{A'}-\widehat{N},n+1}\diamond U_{N,n+1}\diamond U_{B',n}\\
&=&\sum\limits_{B',N\in\Iso(\ca)}\left(\sum\limits_{X,Y',X'\in\Iso(\ca)}\gamma_{A'B}^{Y'X}\gamma_{XA}^{X'N}g_{X'Y'}^{B'}\frac{\langle \widehat{A}-\widehat{X'},\widehat{X'}\rangle}{\langle\widehat{Y'}-\widehat{B},\widehat{Y'}+\widehat{X'}\rangle}\right)\frac{a_Aa_{A'}a_B}{a_Na_{B'}}
\\
 &&\ \ \ K_{\widehat{A'}-\widehat{N},n+1}\diamond U_{N,n+1}\diamond U_{B',n}.
\end{eqnarray*}
From the basis of the twisted modified Ringel-Hall algebra deduced from the Theorem \ref{theorem basis of modified}(2), for fixed isomorphism classes $B',N\in\Iso(\ca)$, we have
\begin{eqnarray}
\sum\limits_{C\in\Iso(\ca)}g_{AB}^C\gamma_{A'C}^{B'N}\langle\widehat{A'}-\widehat{N},\widehat{B'}\rangle=\sum\limits_{X,Y',X'\in\Iso(\ca)}\gamma_{A'B}^{Y'X}\gamma_{XA}^{X'N}g_{X'Y'}^{B'}\frac{\langle \widehat{A}-\widehat{X'},\widehat{X'}\rangle}{\langle\widehat{Y'}-\widehat{B},\widehat{Y'}+\widehat{X'}\rangle}.\label{equ coefficient}
\end{eqnarray}

By the definitions, for any $D, E, F, G\in\ca$ one can easily get that  $$\gamma_{DE}^{FG}=\sum\limits_{I\in\Iso(\ca)}g_{IF}^Eg_{GI}^D\frac{a_Fa_Ia_G}{a_Da_E}.$$
In particular, we have $\gamma_{DE}^{F0}=g_{DF}^E\frac{a_F}{a_E}.$

In \text{the identity}~(\ref{equ coefficient}) if we set $N=0$, then
\begin{eqnarray*}
\text{LHS of the identity}~(\ref{equ coefficient})
&=&\sum\limits_{C\in\Iso(\ca)}g_{AB}^{C}g_{A'B'}^{C}\frac{a_{B'}}{a_{C}}\langle\widehat{A'},\widehat{B'}\rangle\\
&=&\langle\widehat{A'},\widehat{B'}\rangle\sum\limits_{C\in\Iso(\ca)}g_{AB}^{C}g_{A'B'}^{C}\frac{a_{B'}}{a_{C}},
\end{eqnarray*}
and
\begin{eqnarray*}
\text{RHS of the identity}~(\ref{equ coefficient})&=&\sum\limits_{X,Y',X'\in\Iso(\ca)}\gamma_{A'B}^{Y'X}\gamma_{XA}^{X'0}g_{X'Y'}^{B'}\frac{\langle \widehat{A}-\widehat{X'},\widehat{X'}\rangle}{\langle\widehat{Y'}-\widehat{B},\widehat{Y'}+\widehat{X'}\rangle}\\
&=&\sum\limits_{X,Y,X',Y'\in\Iso(\ca)}g_{YY'}^Bg_{XY}^{A'}g_{X'Y'}^{B'}g_{XX'}^{A}\frac{a_Xa_Ya_{Y'}a_{X'}}{a_Aa_{A'}a_B}\frac{\langle \widehat{A}-\widehat{X'},\widehat{X'}\rangle}{\langle\widehat{Y'}-\widehat{B},\widehat{Y'}+\widehat{X'}\rangle}.
\end{eqnarray*}
Note that one only need to consider the non-zero terms in the above formula and so we can set $\widehat{B}=\widehat{Y}+\widehat{Y'}, \,
\widehat{A'}=\widehat{X}+\widehat{Y}, \, \widehat{B'}=\widehat{X'}+\widehat{Y'} $ and $\widehat{A}=\widehat{X}+\widehat{X'}$.
Thus we have
\begin{eqnarray*}
\frac{\langle \widehat{A}-\widehat{X'},\widehat{X'}\rangle}{\langle\widehat{A'},\widehat{B'}\rangle\langle\widehat{Y'}-\widehat{B},\widehat{Y'}+\widehat{X'}\rangle}&=&\frac{\langle \widehat{Y},\widehat{B'}\rangle\langle\widehat{A}-\widehat{X'},\widehat{X'}\rangle}{\langle\widehat{A'},\widehat{B'}\rangle}\\
&=&\frac{\langle \widehat{X},\widehat{X'}\rangle}{\langle\widehat{X},\widehat{B'}\rangle}\\
&=&\frac{1}{\langle\widehat{X},\widehat{Y'}\rangle}\\
&=&\frac{|\Ext_{\ca}^1(X,Y')|}{|\Hom_{\ca}(X,Y')|}.
\end{eqnarray*}
Therefore we obtain Green's formula.

\end{proof}

\section{Derived Hall Algebras and  Modified Ringel-Hall Algebras}

\subsection{Twisted Derived Hall Algebras and Twisted Modified Ringel-Hall Algebras}

Let $\ct$ be a $k$-additive triangulated category with the translation $T=[1]$ satisfying
\begin{itemize}
\item[(i)] $\dim_k\Hom_{\ct}(X, Y)<\infty$ for any two objects $X$ and $Y$,
\item[(ii)] $\End_{\ct}(X)$ is local for any indecomposable object $X$,
\item[(iii)] $\ct$ is (left) locally finite; that is, $\sum_{i\geq 0}\dim_k\Hom_{\ct}(X[i], Y)<\infty$ for any $X$ and $Y$.
\end{itemize}

The derived Hall algebra $\cd\ch(\ct)$ of the triangulated category $\ct$ is the $\mathbb{Q}$-space with the basis $\{[X]|X\in\ct\}$ and the multiplication is defined by
$$[X][Y]=\sum_{[L]}\frac{|\Ext^1_\ct(X, Y)_L|}{\prod_{i\geq 0}|\Hom_{\ct}(X[i], Y)|^{(-1)^i}}[L],$$
where $\Ext^1_\ct(X, Y)_L$ is defined to be $\Hom_{\ct}(X, Y[1])_{L[1]}$ which denotes the subset of $\Hom(X, Y[1])$ consisting of morphisms $l: X\rightarrow Y[1]$ whose cone $\Cone(l)$ is isomorphic to $L[1]$. Here the definition we used is a version in \cite{XX2} so-called the Drinfeld dual of the derived Hall algebra given  by To\"{e}n in \cite{T} and also by Xiao-Xu in \cite{XX}.

Similar to the work of To\"{e}n we can describe the derived Hall algebra for the hereditary abelian category $\ca$ as follows.

\begin{proposition}\label{proposition derived hall algebra}
$\cd\ch(\ca)$ is an associative and unital $\mathbb{Q}$-algebra generated by the set $$\{Z_A^{[n]}|\ A\in\Iso(\ca), n\in\mathbb{Z}\},$$  with the defining relations as follows.
\begin{eqnarray}
Z_A^{[n]}Z_B^{[n]}&=&\sum\limits_{C\in \Iso(\ca)}\frac{|\Ext^1_\ca(A,B)_C|}{|\Hom_\ca(A,B)|}Z_C^{[n]},\\
Z_B^{[n]}Z_A^{[n+1]}&=&\sum\limits_{M, N\in\Iso(\ca)}\gamma_{AB}^{MN}\frac{a_Aa_B}{a_Ma_N}\frac{1}{\langle \widehat{N}, \widehat{M}\rangle}Z_N^{[n+1]}Z_M^{[n]},\\
Z_B^{[n]}Z_A^{[m]}&=&\langle \widehat{A}, \widehat{B}\rangle^{(-1)^{m-n}}Z_A^{[m]}Z_B^{[n]}\ \text{for~} m>n+1.
\end{eqnarray}
\end{proposition}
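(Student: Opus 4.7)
The plan is to identify the generator $Z_A^{[n]}$ with the class $[A[-n]]\in\cd\ch(D^b(\ca))$ of the stalk complex with $A$ concentrated in degree $n$, and then to check generation, the three listed relations, and finally completeness via a basis count.

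For generation, since $\ca$ is hereditary every object of $D^b(\ca)$ is isomorphic to the direct sum of its shifted cohomologies $\bigoplus_n H^n(X)[-n]$; hence each basis element $[X]$ of $\cd\ch(\ca)$ already lies in the subalgebra generated by the classes $Z_A^{[n]}=[A[-n]]$.

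I would verify each relation by a direct calculation from the derived Hall product. For the first relation, taking $X=A[-n]$, $Y=B[-n]$, hereditariness gives $\Hom_{D^b(\ca)}(A[-n+i],B[-n])=\Hom_\ca(A,B[-i])=0$ for all $i\ge 1$, so the alternating denominator collapses to $|\Hom_\ca(A,B)|$; the morphisms $A[-n]\to B[-n+1]$ correspond to $\Ext^1_\ca(A,B)$ with middle object $C[-n]$, recovering the formula on the nose. For the second relation, I would compute $[B[-n]][A[-n-1]]$: the morphisms $l:B[-n]\to A[-n]$ are precisely the elements $f\in\Hom_\ca(B,A)$, the denominator is trivial, and the key input is the decomposition $\Cone(f)\cong\Ker(f)[1]\oplus\Coker(f)$ in $D^b(\ca)$, valid because $\ca$ is hereditary. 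Hence the middle object is $L\cong\Ker(f)[-n]\oplus\Coker(f)[-n-1]$, and stratifying by $M=\Ker f$, $N=\Coker f$ the count of such $f$ equals $\gamma_{AB}^{MN}a_Aa_B/(a_Ma_N)$ by the very definition of $\gamma$. A separate short computation gives $[N[-n-1]][M[-n]]=\langle\widehat{N},\widehat{M}\rangle\,[N[-n-1]\oplus M[-n]]$, the extra scalar coming from the alternating denominator $|\Ext^1_\ca(N,M)|/|\Hom_\ca(N,M)|$; dividing produces the stated coefficient $1/\langle\widehat{N},\widehat{M}\rangle$. For the third relation with $m>n+1$, hereditariness kills every relevant $\Ext^{\ge 2}$, so each of $[B[-n]][A[-m]]$ and $[A[-m]][B[-n]]$ collapses to a single multiple of $[A[-m]\oplus B[-n]]$; the two scalars differ precisely by the Euler factor $\langle\widehat{A},\widehat{B}\rangle^{(-1)^{m-n}}$, giving the claimed exchange.

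For completeness I would argue by a normal-form count. The second and third relations let us move any monomial of generators into the order in which the upper indices strictly decrease from left to right, and the first relation then collapses each same-level block into a single generator. So the algebra presented by these generators and relations is spanned by the monomials $Z_{A_r}^{[r]}Z_{A_{r-1}}^{[r-1]}\cdots Z_{A_l}^{[l]}$ with $r\ge l$ and $A_i\in\Iso(\ca)$. Under the map $Z_A^{[n]}\mapsto[A[-n]]$ these go to the Krull--Schmidt basis elements $[\bigoplus_{i=l}^{r}A_i[-i]]$ of $\cd\ch(\ca)$, which are pairwise distinct; hence the natural surjection is an isomorphism and the listed relations are defining. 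The main technical obstacle is the computation for the second relation: one must simultaneously track the cone decomposition in $D^b(\ca)$, the combinatorial count feeding into $\gamma_{AB}^{MN}$, and the Euler factor from the alternating denominator of $[N[-n-1]][M[-n]]$, any of which can introduce an inverted scalar or an off-by-one shift if handled carelessly.
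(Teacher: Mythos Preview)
The paper does not actually supply a proof of this proposition; it simply states the result after the sentence ``Similar to the work of To\"{e}n we can describe the derived Hall algebra for the hereditary abelian category $\ca$ as follows'', treating it as a routine specialization of the general framework in \cite{T} and \cite{XX,XX2}. Your argument therefore provides exactly the kind of direct verification the paper omits, and it is correct in all essential points: the identification $Z_A^{[n]}=[A[-n]]$, the collapse of the alternating denominator in each of the three regimes, the cone decomposition $\Cone(f)\cong\Ker(f)[1]\oplus\Coker(f)$ valid in the hereditary setting, the count $|\{f:\Ker f\cong M,\ \Coker f\cong N\}|=\gamma_{AB}^{MN}a_Aa_B/(a_Ma_N)$, and the normal-form argument for completeness are all sound. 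One small imprecision worth fixing: under your map the ordered monomials $Z_{A_r}^{[r]}\cdots Z_{A_l}^{[l]}$ land on \emph{nonzero scalar multiples} of the basis elements $[\bigoplus_i A_i[-i]]$ rather than on the basis elements themselves, since each step $Z_{A_{j+1}}^{[j+1]}*(-)$ contributes the denominator factor you computed for the third relation; this does not affect the linear-independence conclusion, but you should say so explicitly.
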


We can define the twisted derived Hall algebra $\cd\ch_{tw}(\ca)$ by the twisted multiplication
\begin{equation*}
[X]*[Y]=\prod_{i\in\mathbb{Z}}|\Hom_{D^b(\ca)}(X, Y[i])|^{(-1)^i}[X][Y],
\end{equation*}
for any $[X], [Y]\in\Iso(D^b(\ca))$. Here $\prod_{i\in\mathbb{Z}}|\Hom_{D^b(\ca)}(X, Y[i])|^{(-1)^i}$ is the Euler
form of the derived category $D^b(\ca)$.
Then one can easily get the following proposition.
\begin{proposition}
$\cd\ch_{tw}(\ca)$ is an associative and unital $\mathbb{Q}$-algebra generated by the set $$\{Z_A^{[n]}|\ A\in\Iso(\ca), n\in\mathbb{Z}\},$$
with the defining relations as follows.
\begin{eqnarray}
Z_A^{[n]}*Z_B^{[n]}&=&\sum\limits_{C\in \Iso(\ca)}\langle \widehat{A}, \widehat{B}\rangle \frac{|\Ext^1_\ca(A,B)_C|}{|\Hom_\ca(A,B)|}Z_C^{[n]},\label{the first relation in twisted derived hall algebra}\\
Z_B^{[n]}*Z_A^{[n+1]}&=&\sum\limits_{M, N\in\Iso(\ca)}\gamma_{AB}^{MN}\frac{a_Aa_B}{a_Ma_N}\frac{1}{\langle \widehat{B}, \widehat{A}\rangle}Z_N^{[n+1]}*Z_M^{[n]},\label{relation in twisted derived hall alg of adjacent 2 lattices}\\
Z_B^{[n]}*Z_A^{[m]}&=&\langle \widehat{B} , \widehat{A}\rangle^{(-1)^{n-m}}Z_A^{[m]}*Z_B^{[n]}\ \text{for~} m>n+1.\label{the last relation in twisted derived hall alg }
\end{eqnarray}
\end{proposition}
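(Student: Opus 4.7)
The plan is to deduce the three defining relations of $\cd\ch_{tw}(\ca)$ directly from the untwisted relations of Proposition~\ref{proposition derived hall algebra} by computing, in each case, the Euler twist factor
$$E(X,Y):=\prod_{i\in\mathbb{Z}}|\Hom_{D^b(\ca)}(X, Y[i])|^{(-1)^i}$$
for the stalk complexes involved. The key observation is that, because $\ca$ is hereditary, for any $A,B\in\ca$ and $s,t\in\mathbb{Z}$ one has $\Hom_{D^b(\ca)}(U_{B,s}, U_{A,t+i})=0$ unless $t+i-s\in\{0,1\}$, in which cases the Hom equals $\Hom_\ca(B,A)$ or $\Ext^1_\ca(B,A)$ respectively. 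Hence $E(U_{B,s},U_{A,t})$ is always a monomial power of $\langle\widehat{B},\widehat{A}\rangle$, read off from the parity of $s-t$.

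With this input each relation is a routine substitution. For~(\ref{the first relation in twisted derived hall algebra}), only $i=0,1$ contribute, giving $E(U_{A,n},U_{B,n})=\langle\widehat{A},\widehat{B}\rangle$, so multiplying the first relation of Proposition~\ref{proposition derived hall algebra} by this scalar yields~(\ref{the first relation in twisted derived hall algebra}). For~(\ref{relation in twisted derived hall alg of adjacent 2 lattices}), the contributions $i=-1,0$ give $E(U_{B,n},U_{A,n+1})=\langle\widehat{B},\widehat{A}\rangle^{-1}$, while the contributions $i=1,2$ give $E(U_{N,n+1},U_{M,n})=\langle\widehat{N},\widehat{M}\rangle^{-1}$; inserting $Z_B^{[n]}Z_A^{[n+1]}=\langle\widehat{B},\widehat{A}\rangle Z_B^{[n]}*Z_A^{[n+1]}$ and $Z_N^{[n+1]}Z_M^{[n]}=\langle\widehat{N},\widehat{M}\rangle Z_N^{[n+1]}*Z_M^{[n]}$ into the second relation of Proposition~\ref{proposition derived hall algebra} makes the $\langle\widehat{N},\widehat{M}\rangle^{-1}$ factors cancel, leaving exactly~(\ref{relation in twisted derived hall alg of adjacent 2 lattices}). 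For~(\ref{the last relation in twisted derived hall alg }) with $m>n+1$, the contributions $i=n-m,n-m+1$ and $i=m-n,m-n+1$ yield $E(U_{B,n},U_{A,m})=\langle\widehat{B},\widehat{A}\rangle^{(-1)^{n-m}}$ and $E(U_{A,m},U_{B,n})=\langle\widehat{A},\widehat{B}\rangle^{(-1)^{m-n}}$; combined with the third relation of Proposition~\ref{proposition derived hall algebra}, this produces~(\ref{the last relation in twisted derived hall alg }).

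Since the twist only rescales products by nonzero rational scalars, the underlying $\Q$-vector space and the generating set $\{Z_A^{[n]}\}$ are unchanged, and completeness of the presentation is inherited from Proposition~\ref{proposition derived hall algebra}. The only point requiring care is a consistent bookkeeping of the signs $(-1)^i$ and the shift indices; no deeper input is needed, since hereditariness of $\ca$ guarantees that at most two values of $i$ ever contribute to $E$. This is what I expect to be the main (and only) obstacle, and it is essentially a two-line calculation in each case.
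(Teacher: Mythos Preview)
Your approach is correct and is exactly what the paper intends: the paper does not give a detailed proof of this proposition but merely says ``one can easily get the following proposition'' from the untwisted presentation in Proposition~\ref{proposition derived hall algebra} and the definition of the twist, and your computation of the Euler factors $E(U_{B,s},U_{A,t})=\langle\widehat{B},\widehat{A}\rangle^{(-1)^{t-s}}$ is precisely that omitted calculation. One cosmetic remark: with the paper's cohomological conventions one has $U_{A,t}[i]=U_{A,t-i}$, so the nonzero contributions to $E(U_{B,n},U_{A,n+1})$ occur at $i=1,2$ and those to $E(U_{N,n+1},U_{M,n})$ at $i=-1,0$ (you have these two swapped), but since $(-1)^{n-m}=(-1)^{m-n}$ this does not affect any of your stated values or the final relations.
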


Now we define the twisted modified Ringel-Hall algebra $\cm\ch_{tw}(\ca)$ by the Euler form for $\cc^b(\ca)$, i.e., the multiplication in $\cm\ch_{tw}(\ca)$ is given by
\begin{equation}\label{twisted multiplication of modified}
[M_1]*[M_2]=\langle [M_1],[M_2]\rangle[M_1]\diamond[M_2],\forall [M_1],[M_2]\in\mathrm{Iso}(\cc^{b}(\ca)).
\end{equation}
Then $\cm\ch_{tw}(\ca)$ is still an associative and unital $\mathbb{Q}$-algebra.

For $A,B\in \ca$, If $\alpha=\widehat{A}-\widehat{B}$, we define $K_{\alpha, m}=[K_{A, m}]*[K_{B, m}]^{-1}$.
Let $\mathbb{T}_{ac}^{tw}(\ca)$ denote the twisted quantum torus of acyclic complexes. Then $\mathbb{T}_{ac}^{tw}(\ca)$ is the subalgebra of $\cm\ch_{tw}(\ca)$ generated by $\{ K_{\alpha, m} | \ \alpha\in K_0(\ca), m\in \mathbb{Z} \}$.  By the twisted multiplication one can easily see that
$\mathbb{T}_{ac}^{tw}(\ca)$ is commutative and so it is isomorphic to the group algebra of the Grothendieck group $K_0(\cc_{ac}^b\ca)$.

By Proposition \ref{proposition euler form of acyclic complexes}, Proposition \ref{proposition Euler form of stalk complexes} and Proposition \ref{proposition relations} one can easily get the following proposition.
\begin{proposition}\label{proposition of relations in twist MH(A)}
$\cm\ch_{tw}(\ca)$ is generated by the set
$$\{U_{A,n},K_{\alpha,n}|\ A\in\Iso(\ca), \alpha\in K_0(\ca), n\in\mathbb{Z}\}$$
with the defining relations as follows.
\begin{eqnarray}
U_{A,n}*U_{B,n}&=&\sum\limits_{C\in \Iso(\ca)}\langle \widehat{A},\widehat{B}\rangle\frac{|\Ext^1_{\ca}(A,B)_C|}{|\Hom_{\ca}(A,B)|}U_{C,n}, \\
K_{\alpha,n}*U_{A,n}&=&U_{A,n}*K_{\alpha,n},\\
K_{\alpha, n}*K_{\beta, n}&=&K_{\alpha+\beta, n};
\end{eqnarray}
\begin{eqnarray}
U_{A,n}*K_{\alpha,n+1}&=&K_{\alpha,n+1}*U_{A,n},\\
K_{\alpha,n}*U_{A,n+1}&=&U_{A,n+1}*K_{\alpha,n},\\
K_{\alpha,n}*K_{\beta,n+1}&=&K_{\beta,n+1}*K_{\alpha,n},
\end{eqnarray}
\begin{eqnarray}
\quad U_{B,n}*U_{A,n+1}&=&\sum\limits_{M,N\in\Iso(\mathcal{A})}\gamma_{AB}^{MN}\frac{a_Aa_B}{a_Ma_N}\frac{1}{\langle\widehat{B}, \widehat{A}\rangle}U_{N,n+1}*U_{M,n}*K_{\widehat{B}-\widehat{M},n+1};
\end{eqnarray}
and if $m>n+1$, then
\begin{eqnarray}
U_{B,n}*U_{A,m}&=&\langle \widehat{B} , \widehat{A}\rangle^{(-1)^{m-n}}U_{A,m}*U_{B,n},\label{the quasi-commutative of twisted modified hall}\\
U_{B,n}*K_{\alpha,m}=K_{\alpha,m}*U_{B,n},&&U_{B,m}*K_{\alpha,n}=K_{\alpha,n}*U_{B,m}\\
K_{\beta,n}*K_{\alpha,m}&=&K_{\alpha,m}*K_{\beta,n},
\end{eqnarray}
 for any $A, B\in\Iso(\ca)$, $\alpha, \beta\in K_{0}(\ca)$ and $m, n\in\mathbb{Z}$.
\end{proposition}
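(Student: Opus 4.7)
The plan is to derive the presentation from Proposition~\ref{proposition relations} via the twist formula~(\ref{twisted multiplication of modified}), using the Euler-form computations in Propositions~\ref{proposition euler form of acyclic complexes} and~\ref{proposition Euler form of stalk complexes}. Since $\cm\ch_{tw}(\ca)$ and $\cm\ch(\ca)$ coincide as $\mathbb{Q}$-vector spaces, the set $\{U_{A,n},K_{\alpha,n}\}$ is still a generating set under $*$, and it remains to translate each defining $\diamond$-relation into the corresponding $*$-relation via $[M_1]\diamond[M_2]=\langle [M_1],[M_2]\rangle^{-1}[M_1]*[M_2]$.

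For the relations involving at most two generators the translation is routine. For example, at a fixed level $n$ one has $\langle [U_{A,n}],[K_{\alpha,n}]\rangle=\langle \widehat{A},\alpha\rangle$ and $\langle [K_{\alpha,n}],[U_{A,n}]\rangle=1$ by Proposition~\ref{proposition euler form of acyclic complexes}, which reduces $K\diamond U=\langle \widehat{A},\alpha\rangle U\diamond K$ to $K*U=U*K$. Similarly $\langle [K_{\alpha,n}],[K_{\beta,n}]\rangle=\langle \alpha,\beta\rangle$ cancels the scalar in $K\diamond K=\langle\alpha,\beta\rangle^{-1}K_{\alpha+\beta}$, producing $K*K=K_{\alpha+\beta}$. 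The far-apart quasi-commutations come from $\langle [U_{B,n}],[U_{A,m}]\rangle=\langle \widehat{B},\widehat{A}\rangle^{(-1)^{m-n}}$ of Proposition~\ref{proposition Euler form of stalk complexes}, and the remaining same-level and adjacent-level identities among $K$'s and $U$'s are handled identically.

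The main technical step is the three-term identity for $U_{B,n}*U_{A,n+1}$. Here the LHS acquires the scalar $\langle [U_{B,n}],[U_{A,n+1}]\rangle^{-1}=\langle \widehat{B},\widehat{A}\rangle$, while each triple $\diamond$-product $K_{\widehat{B}-\widehat{M},n+1}\diamond U_{N,n+1}\diamond U_{M,n}$ expands under the twist as the inverse of $\langle K,U_{N,n+1}\rangle\cdot\langle K,U_{M,n}\rangle\cdot\langle U_{N,n+1},U_{M,n}\rangle$ times the $*$-product. By the two Euler-form propositions this triple product equals $1\cdot\langle \widehat{B}-\widehat{M},\widehat{M}\rangle\cdot 1$, which cancels the matching scalar in the untwisted identity. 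Applying the already-derived commutations $K_{\alpha,n+1}*U_{N,n+1}=U_{N,n+1}*K_{\alpha,n+1}$ and $K_{\alpha,n+1}*U_{M,n}=U_{M,n}*K_{\alpha,n+1}$ to move $K_{\widehat{B}-\widehat{M},n+1}$ to the rightmost position then produces the stated form.

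The main obstacle is precisely the bookkeeping in this three-term step, which requires both Euler-form propositions together with the just-established $*$-commutation relations; once this is done, the listed $*$-identities are automatically \emph{defining} because the twist is invertible, so any hidden relation in $\cm\ch_{tw}(\ca)$ would pull back to one in $\cm\ch(\ca)$ not accounted for by Proposition~\ref{proposition relations}, contradicting the basis given in Theorem~\ref{theorem basis of modified}(2).
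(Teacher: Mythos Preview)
Your proposal is correct and follows precisely the approach the paper indicates: the paper's own proof is the single sentence ``By Proposition~\ref{proposition euler form of acyclic complexes}, Proposition~\ref{proposition Euler form of stalk complexes} and Proposition~\ref{proposition relations} one can easily get the following proposition,'' and you have faithfully supplied the omitted details of translating each $\diamond$-relation into a $*$-relation via the twist and the cited Euler-form computations. Your handling of the three-term relation and the observation that invertibility of the twist transfers ``defining'' status are exactly what is needed.
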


\begin{remark}
Note that any  $K_{\alpha,m}$ is commutative with all elements in $\cm\ch_{tw}(\ca)$. This is similar to the case of the twisted semi-derived Hall algebra given by  Gorsky in \cite{Gor13}.
\end{remark}

\subsection{Main results} The following is our embedding theorem.
\begin{theorem}\label{main result}
There is an embedding of the twisted derived Hall algebra $\cd\ch_{tw}(\ca)$ in the twisted modified Ringel-Hall algebra $\cm\ch_{tw}(\ca)$.
\end{theorem}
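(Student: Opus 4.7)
The plan is to define an algebra homomorphism $\Phi:\cd\ch_{tw}(\ca)\to\cm\ch_{tw}(\ca)$ using the presentations given in Proposition~\ref{proposition derived hall algebra} and Proposition~\ref{proposition of relations in twist MH(A)}, and then to prove injectivity by comparing bases. On generators I would set $\Phi(Z_A^{[n]})=U_{A,n}*T_{A,n}$, where $T_{A,n}$ is a suitably chosen element of the central quantum torus $\T_{ac}^{tw}(\ca)$ depending additively on the class $\widehat A\in K_0(\ca)$ and on $n$. By the Remark following Proposition~\ref{proposition of relations in twist MH(A)}, every $K_{\alpha,m}$ is central, so each $T_{A,n}$ commutes past every $U$ and $K$. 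This reduces the verification of the same-degree relation of $\cd\ch_{tw}(\ca)$ and the well-separated-degree quasi-commutativity relation to the corresponding relations in $\cm\ch_{tw}(\ca)$, leaving only the cross-degree relation in consecutive degrees to check.

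For the cross-degree relation, I would expand $\Phi(Z_B^{[n]})*\Phi(Z_A^{[n+1]})$ using the identity $U_{B,n}*U_{A,n+1}=\sum_{M,N}\gamma_{AB}^{MN}\tfrac{a_Aa_B}{a_Ma_N}\tfrac{1}{\langle\widehat B,\widehat A\rangle}U_{N,n+1}*U_{M,n}*K_{\widehat B-\widehat M,n+1}$ from Proposition~\ref{proposition of relations in twist MH(A)}, and collect the torus factors to one side. The resulting expression is the desired right-hand sum multiplied termwise by the central factor $T_{B,n}*T_{A,n+1}*K_{\widehat B-\widehat M,n+1}$. Matching against $\sum_{M,N}\gamma_{AB}^{MN}\tfrac{a_Aa_B}{a_Ma_N}\tfrac{1}{\langle\widehat B,\widehat A\rangle}\Phi(Z_N^{[n+1]})*\Phi(Z_M^{[n]})$, whose termwise central factor is $T_{N,n+1}*T_{M,n}$, and using the identity $\widehat B-\widehat M=\widehat A-\widehat N$ forced by the four-term exact sequences parametrizing $\gamma_{AB}^{MN}$, reduces the requirement to a single linear equation relating $T_{\bullet,n}$ and $T_{\bullet,n+1}$. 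I would then solve this recursion slot by slot in the direct-sum decomposition $K_0(\cc^b_{ac}(\ca))\cong\bigoplus_{m\in\Z}K_0(\ca)$ that follows from Lemma~\ref{lemma decomposition of acyclic complex} and the independence of $[K_{X,m}]$ for varying~$m$.

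For injectivity I would compare bases. Using the verified relations, $\cd\ch_{tw}(\ca)$ has a Poincar\'e--Birkhoff--Witt-type basis of monomials $Z_{A_r}^{[r]}*Z_{A_{r-1}}^{[r-1]}*\cdots*Z_{A_l}^{[l]}$ ordered in strictly decreasing $n$, with $A_i\in\Iso(\ca)$. Theorem~\ref{theorem basis of modified}(2) supplies a basis of $\cm\ch_{tw}(\ca)$ in which each element is a $K$-monomial followed by a $U$-monomial $U_{A_r,r}\diamond\cdots\diamond U_{A_l,l}$ in strictly decreasing degree. Applying $\Phi$ to a PBW monomial and iterating the verified relations expresses its image as the basis element of Theorem~\ref{theorem basis of modified}(2) with the same $U$-tail, multiplied by an invertible central $K$-factor. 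Distinct PBW monomials therefore have distinct images, which gives injectivity.

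The main obstacle is the slot-by-slot recursion in the middle paragraph. Because the discrepancy $K_{\widehat B-\widehat M,n+1}$ varies with the summation index, the correction $T_{A,n}$ cannot be concentrated at a single degree slot; matching the central factors will require a careful choice of $T_{A,n}$ that combines several $K$-slots telescopically, and pinning this choice down is the most delicate step of the proof.
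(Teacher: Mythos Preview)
Your proposal is correct and follows essentially the same route as the paper. The paper's explicit solution of your telescoping recursion is $T_{A,0}=1$, $T_{A,n}=\prod_{i=1}^{n}(K_{\widehat A,\,n-i+1})^{(-1)^i}$ for $n>0$, and $T_{A,-n}=\prod_{i=1}^{n}(K_{\widehat A,\,i-n})^{(-1)^i}$ for $n>0$; with this choice the verification of the three relations and the basis-comparison argument for injectivity are exactly as you describe.
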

\begin{proof}
We construct a map $\iota: \cd\ch_{tw}(\ca)\rightarrow\cm\ch_{tw}(\ca)$ defined by
$$Z_A^{[0]}\mapsto U_{A,0},\quad Z_A^{[n]}\mapsto U_{A,n}*\prod_{i=1}^{n}(K_{\widehat{A},n-i+1})^{(-1)^i}\ \text{and}\ Z_A^{[-n]}\mapsto U_{A,-n}*\prod_{i=1}^{n}(K_{\widehat{A},i-n})^{(-1)^i},$$
for any $A\in\Iso(\ca), n>0$, where both $\prod_{i=1}^{n}(K_{\widehat{A},n-i+1})^{(-1)^i}$ and $\prod_{i=1}^{n}(K_{\widehat{A},i-n})^{(-1)^i}$ denote the twisted multiplications of acyclic complexes in $\cm\ch_{tw}(\ca)$.

To prove $\iota$ is a well-defined morphism of algebras we just need to check the corresponding relations (\ref{the first relation in twisted derived hall algebra})-(\ref{the last relation in twisted derived hall alg }) in $\cd\ch_{tw}(\ca)$ under $\iota$.

Because any $K_{\alpha,n}$ is commutative in $\cm\ch_{tw}(\ca)$, we can easily obtain the relation (\ref{the first relation in twisted derived hall algebra}).

Let $A, B\in\Iso(\ca)$. For any $n>1$, we have the following identities.

\begin{eqnarray*}
&&\iota(Z_B^{[-n]})*\iota(Z_A^{[-n+1]})\\
&=&U_{B,-n}*\prod_{i=1}^{n}(K_{\widehat{B},i-n})^{(-1)^i}*U_{A,-n+1}*\prod_{j=1}^{n-1}(K_{\widehat{A},j-n+1})^{(-1)^j}\\
&=&U_{B,-n}*U_{A,-n+1}*\prod_{i=1}^{n}(K_{\widehat{B},i-n})^{(-1)^i}*\prod_{i=2}^{n}(K_{\widehat{A},i-n})^{(-1)^{(i-1)}}\\
&=&U_{B,-n}*U_{A,-n+1}*(K_{\widehat{B},-n+1})^{-1}*\prod_{i=2}^{n}(K_{\widehat{B},i-n}*(K_{\widehat{A},i-n})^{-1})^{(-1)^i}\\
&=&\sum\limits_{M, N\in\Iso(\mathcal{A})}\gamma_{AB}^{MN}\frac{a_Aa_B}{a_Ma_N}\frac{1}{\langle\widehat{B}, \widehat{A}\rangle}U_{N,-n+1}*U_{M,-n}\\&&*K_{\widehat{B}-\widehat{M},-n+1}*(K_{\widehat{B},-n+1})^{-1}*\prod_{i=2}^{n}(K_{\widehat{B}-\widehat{A},i-n})^{(-1)^i}\\
&=&\sum\limits_{M, N\in\Iso(\mathcal{A})}\gamma_{AB}^{MN}\frac{a_Aa_B}{a_Ma_N}\frac{1}{\langle\widehat{B}, \widehat{A}\rangle}U_{N,-n+1}*U_{M,-n}\\
&&*(K_{\widehat{M},-n+1})^{-1}*\prod_{i=2}^{n}(K_{\widehat{M},i-n}*(K_{\widehat{N},i-n})^{-1})^{(-1)^i}\\
&=&\sum\limits_{M, N\in\Iso(\mathcal{A})}\gamma_{AB}^{MN}\frac{a_Aa_B}{a_Ma_N}\frac{1}{\langle \widehat{B}, \widehat{A}\rangle}U_{N,-n+1}*\prod_{j=1}^{n-1}(K_{\widehat{N},j-n+1})^{(-1)^j}*\\
&&U_{M,-n}*\prod_{i=1}^{n}(K_{\widehat{M},i-n})^{(-1)^i}\\
&=&\sum\limits_{M, N\in\Iso(\mathcal{A})}\gamma_{AB}^{MN}\frac{a_Aa_B}{a_Ma_N}\frac{1}{\langle \widehat{B}, \widehat{A}\rangle}\iota(Z_N^{[-n+1]})*\iota(Z_M^{[-n]}),
\end{eqnarray*}

\begin{eqnarray*}
&&\iota(Z_B^{[-1]})*\iota(Z_A^{[0]})\\
&=&U_{B,-1}*(K_{\widehat{B},0})^{-1}*U_{A,0}\\
&=&\sum\limits_{M, N\in\Iso(\mathcal{A})}\gamma_{AB}^{MN}\frac{a_Aa_B}{a_Ma_N}\frac{1}{\langle \widehat{B}, \widehat{A}\rangle}U_{N,0}*U_{M,-1}*K_{\widehat{B}-\widehat{M},0}*(K_{\widehat{B},0})^{-1}\\
&=&\sum\limits_{M, N\in\Iso(\mathcal{A})}\gamma_{AB}^{MN}\frac{a_Aa_B}{a_Ma_N}\frac{1}{\langle \widehat{B}, \widehat{A}\rangle}U_{N,0}*U_{M,-1}*(K_{\widehat{M},0})^{-1}\\
&=&\sum\limits_{M, N\in\Iso(\mathcal{A})}\gamma_{AB}^{MN}\frac{a_Aa_B}{a_Ma_N}\frac{1}{\langle \widehat{B}, \widehat{A}\rangle}\iota(Z_N^{[0]})*\iota(Z_M^{[-1]}),
\end{eqnarray*}

\begin{eqnarray*}
&&\iota(Z_B^{[0]})*\iota(Z_A^{[1]})\\
&=&U_{B,0}*U_{A,1}*(K_{\widehat{A},1})^{-1}\\
&=&\sum\limits_{M, N\in\Iso(\mathcal{A})}\gamma_{AB}^{MN}\frac{a_Aa_B}{a_Ma_N}\frac{1}{\langle \widehat{B}, \widehat{A}\rangle}U_{N,1}*U_{M,0}*K_{\widehat{B}-\widehat{M},1}*(K_{\widehat{A},1})^{-1}\\
&=&\sum\limits_{M, N\in\Iso(\mathcal{A})}\gamma_{AB}^{MN}\frac{a_Aa_B}{a_Ma_N}\frac{1}{\langle \widehat{B}, \widehat{A}\rangle}U_{N,1}*(K_{N,1})^{-1}*U_{M,0}\\
&=&\sum\limits_{M, N\in\Iso(\mathcal{A})}\gamma_{AB}^{MN}\frac{a_Aa_B}{a_Ma_N}\frac{1}{\langle \widehat{B}, \widehat{A}\rangle}\iota(Z_N^{[1]})*\iota(Z_M^{[0]}),
\end{eqnarray*}
and
\begin{eqnarray*}
&&\iota(Z_B^{[n]})*\iota(Z_A^{[n+1]})\\
&=&U_{B,n}*U_{A,n+1}*\prod_{i=1}^{n}(K_{\widehat{B},n+1-i})^{(-1)^i}*(K_{\widehat{A},n+1})^{-1}*\prod_{j=2}^{n+1}(K_{\widehat{A},n+2-j})^{(-1)^j}\\
&=&U_{B,n}*U_{A,n+1}*(K_{\widehat{A},n+1})^{-1}*\prod_{i=1}^{n}(K_{\widehat{B},n+1-i}*(K_{\widehat{A},n+1-i})^{-1})^{(-1)^i}\\
&=&\sum\limits_{M, N\in\Iso(\mathcal{A})}\gamma_{AB}^{MN}\frac{a_Aa_B}{a_Ma_N}\frac{1}{\langle\widehat{B}, \widehat{A}\rangle}U_{N,n+1}*U_{M,n}*K_{\widehat{B}-\widehat{M},n+1}*\\
&&(K_{\widehat{A},n+1})^{-1}*\prod_{i=1}^{n}(K_{\widehat{B}-\widehat{A},n+1-i})^{(-1)^i}\\
&=&\sum\limits_{M, N\in\Iso(\mathcal{A})}\gamma_{AB}^{MN}\frac{a_Aa_B}{a_Ma_N}\frac{1}{\langle\widehat{B}, \widehat{A}\rangle}U_{N,n+1}*U_{M,n}*(K_{\widehat{N},n+1})^{-1}*\\&&\prod_{i=1}^{n}(K_{\widehat{M}-\widehat{N},n+1-i})^{(-1)^i}\\
&=&\sum\limits_{M, N\in\Iso(\mathcal{A})}\gamma_{AB}^{MN}\frac{a_Aa_B}{a_Ma_N}\frac{1}{\langle \widehat{B}, \widehat{A}\rangle}U_{N,n+1}*\prod_{j=1}^{n+1}(K_{\widehat{N},n+2-j})^{(-1)^j}*\\
&&U_{M,n}*\prod_{i=1}^{n}(K_{\widehat{M},n+1-i})^{(-1)^i}
\end{eqnarray*}
\begin{eqnarray*}
&=&\sum\limits_{M, N\in\Iso(\mathcal{A})}\gamma_{AB}^{MN}\frac{a_Aa_B}{a_Ma_N}\frac{1}{\langle \widehat{B}, \widehat{A}\rangle}\iota(Z_N^{[n+1]})*\iota(Z_M^{[n]}).
\end{eqnarray*}
So the relation (\ref{relation in twisted derived hall alg of adjacent 2 lattices}) holds.

For the last  relation (\ref{the last relation in twisted derived hall alg }), we can check it similarly for $m>n+1$ in the following five cases: $n=0, m>1; n<-1, m=0; n<0, m>0; n<m<0$ and $0<n<m$. Each case is clear  since  the quantum torus $\mathbb{T}_{ac}^{tw}(\ca)$ is commutative in $\cm\ch_{tw}(\ca)$.

Note that we have a similar basis as in Theorem \ref{theorem basis of modified} by just replacing the multiplication $\diamond$ with $*$. From this basis one can easily get that $\iota$ is injective. This completes the proof.
\end{proof}

Now we consider  the tensor algebra $\cd\ch_{tw}(\ca)\otimes_{\mathbb{Q}}\mathbb{T}_{ac}^{tw}(\ca)$ as usual, i.e.,
the multiplication is defined as follows.
$$(X_1\otimes T_1)(X_2\otimes T_2)=(X_1*X_2)\otimes (T_1*T_2),$$
for any $X_1,X_2\in\cd\ch_{tw}(\ca), T_1, T_2\in\mathbb{T}_{ac}^{tw}(\ca)$. Then the following result shows that  the above embedding can be extended to an isomorphism.
\begin{corollary}\label{corollary iso DH_{tw}(A) and MH(A)in section 1}
The tensor algebra $\cd\ch_{tw}(\ca)\otimes_{\mathbb{Q}}\mathbb{T}_{ac}^{tw}(\ca)$ is isomorphic to the twisted modified Ringel-Hall algebra $\cm\ch_{tw}(\ca)$.
\end{corollary}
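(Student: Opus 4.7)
The plan is to construct an isomorphism out of the embedding $\iota\colon\cd\ch_{tw}(\ca)\hookrightarrow\cm\ch_{tw}(\ca)$ of Theorem \ref{main result} together with the natural inclusion $\mathbb{T}_{ac}^{tw}(\ca)\hookrightarrow\cm\ch_{tw}(\ca)$. First I would define the $\mathbb{Q}$-linear map
$$\Phi\colon \cd\ch_{tw}(\ca)\otimes_{\mathbb{Q}}\mathbb{T}_{ac}^{tw}(\ca)\longrightarrow\cm\ch_{tw}(\ca),\qquad \Phi(X\otimes T):=\iota(X)*T.$$
The crucial structural input is the remark following Proposition \ref{proposition of relations in twist MH(A)}: every $K_{\alpha,m}$ is central in $\cm\ch_{tw}(\ca)$, so the image of $\mathbb{T}_{ac}^{tw}(\ca)$ lies in the center of the target. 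Combined with the fact that $\iota$ is already an algebra homomorphism, one immediately gets
$$\Phi((X_1\otimes T_1)(X_2\otimes T_2))=\iota(X_1)*T_1*\iota(X_2)*T_2=\iota(X_1*X_2)*(T_1*T_2)=\Phi(X_1\otimes T_1)\Phi(X_2\otimes T_2),$$
so $\Phi$ is an algebra homomorphism.

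Next I would verify surjectivity. Proposition \ref{proposition of relations in twist MH(A)} exhibits $\{U_{A,n},K_{\alpha,n}\}$ as a generating set of $\cm\ch_{tw}(\ca)$. The $K_{\alpha,n}$ lie in $\Phi(1\otimes\mathbb{T}_{ac}^{tw}(\ca))$, while the explicit definition of $\iota$ given in the proof of Theorem \ref{main result} exhibits $U_{A,n}$ as the product of $\iota(Z_A^{[n]})$ with an invertible element of $\mathbb{T}_{ac}^{tw}(\ca)$, namely an appropriate product of powers $(K_{\widehat{A},\cdot})^{\pm 1}$. Hence every generator lies in the image of $\Phi$, and $\Phi$ is surjective.

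For injectivity I would compare bases. Since $\ca$ is hereditary, every iso class in $D^b(\ca)$ has a unique representative $\bigoplus_{i=l}^{r}X_i[i]$ with $X_i\in\Iso(\ca)$, and a standard triangularity argument (using $\Ext^{\geq 2}_{\ca}=0$, so that the leading term of $Z_{X_r}^{[r]}*Z_{X_{r-1}}^{[r-1]}*\cdots*Z_{X_l}^{[l]}$ is a nonzero scalar multiple of the class $[\bigoplus_i X_i[i]]$) shows that the ordered products $\{Z_{X_r}^{[r]}*Z_{X_{r-1}}^{[r-1]}*\cdots*Z_{X_l}^{[l]}\}$ form a basis of $\cd\ch_{tw}(\ca)$. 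Together with the basis $\{K_{\alpha_{r-1},r}*\cdots*K_{\alpha_l,l+1}\}$ of $\mathbb{T}_{ac}^{tw}(\ca)$ (coming from $K_0(\cc_{ac}^b(\ca))\cong\bigoplus_{j}K_0(\ca)$), this yields a basis of the tensor algebra. Applying $\Phi$ to such a basis element, expanding each $\iota(Z_{X_i}^{[i]})$ via the formula of Theorem \ref{main result}, and using the centrality of all $K$'s to slide every $K$-factor to the left and every $U$-factor to the right, one obtains
$$\Phi\bigl((Z_{X_r}^{[r]}*\cdots*Z_{X_l}^{[l]})\otimes(K_{\alpha_{r-1},r}*\cdots*K_{\alpha_l,l+1})\bigr)=K_{\beta_{r-1},r}*\cdots*K_{\beta_l,l+1}*U_{X_r,r}*\cdots*U_{X_l,l},$$
with each $\beta_{j-1}$ of the form $\alpha_{j-1}+\sigma_{j-1}(\widehat{X_l},\dots,\widehat{X_r})$ for an explicit $\mathbb{Z}$-linear combination $\sigma_{j-1}$ of the classes $\widehat{X_i}$. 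With the $X_i$'s held fixed, the assignment $(\alpha_{j-1})\mapsto(\beta_{j-1})$ is an affine bijection on $K_0(\ca)^{r-l}$, so $\Phi$ carries the chosen tensor basis bijectively onto the basis of $\cm\ch_{tw}(\ca)$ given in Theorem \ref{theorem basis of modified}(2). This establishes injectivity, and combined with the algebra-homomorphism property of the first paragraph, $\Phi$ is an algebra isomorphism.

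The main technical obstacle is verifying the precise form of the shift $(\alpha_{j-1})\mapsto(\beta_{j-1})$, which reduces to summing the signed exponents produced by $\iota(Z_{X_i}^{[i]})=U_{X_i,i}*\prod_{k}(K_{\widehat{X_i},\cdot})^{\pm 1}$ at each fixed position $j\in\{l+1,\dots,r\}$ and across all degrees $i\in[l,r]$, while carefully separating the three regimes $i>0$, $i=0$, and $i<0$ built into the definition of $\iota$. The triangularity argument required to produce the ordered-$Z$-product basis of $\cd\ch_{tw}(\ca)$ is an additional but routine verification.
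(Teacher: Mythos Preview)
Your proof is correct and follows essentially the same route as the paper's: define $\Phi(X\otimes T)=\iota(X)*T$, check it is an algebra homomorphism using the centrality of the $K_{\alpha,m}$'s, verify surjectivity on generators, and deduce injectivity from the basis of Theorem~\ref{theorem basis of modified}. The paper's own argument is considerably terser on the last step---it simply asserts that injectivity follows ``easily'' from the basis---whereas you have spelled out the affine bijection $(\alpha_{j-1})\mapsto(\beta_{j-1})$ explicitly.
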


\begin{proof}
Clearly the embedding $\iota$ defined in the proof of Theorem \ref{main result} can be extended to a morphism
$$\widetilde{\iota}: \cd\ch_{tw}(\ca)\otimes_{\mathbb{Q}}\mathbb{T}_{ac}^{tw}(\ca)\rightarrow \cm\ch_{tw}(\ca),$$
by $\widetilde{\iota}(X\otimes T):= \iota(X)*T$, for any $X\in\cd\ch_{tw}(\ca), T\in\mathbb{T}_{ac}^{tw}(\ca)$. It is easy to check that $\widetilde{\iota}$ is an epimorphism of algebras.

From the basis of $\cm\ch(\ca)$ given in Theorem \ref{theorem basis of modified} we easily know that  $\widetilde{\iota}$ is a monomorphism. This completes the proof.
\end{proof}

The following further consequence shows that  the twisted modified Ringel-Hall algebra is invariant under derived equivalences.
\begin{corollary}\label{corollary derived-invariance}
Let  $\cb$ be also an essentially small finitary
hereditary abelian k-category. If there exists a derived equivalence
$$F: \cd^b(\ca)\rightarrow \cd^b(\cb),$$
then $\cm\ch_{tw}(\ca)$ and $\cm\ch_{tw}(\cb)$ are isomorphic.
\end{corollary}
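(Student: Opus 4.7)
The strategy is to invoke the preceding corollary, which identifies $\cm\ch_{tw}(\ca)$ with $\cd\ch_{tw}(\ca)\otimes_\Q\T_{ac}^{tw}(\ca)$ and analogously for $\cb$. As noted in the remark following the generators-and-relations presentation of $\cm\ch_{tw}(\ca)$, every $K_{\alpha,m}$ is central in $\cm\ch_{tw}(\ca)$, so this identification is an honest tensor product of $\Q$-algebras. Consequently, it suffices to produce, separately, algebra isomorphisms
\[
\cd\ch_{tw}(\ca)\cong\cd\ch_{tw}(\cb)\quad\text{and}\quad\T_{ac}^{tw}(\ca)\cong\T_{ac}^{tw}(\cb),
\]
and then tensor them together.

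For the derived Hall factor, the key observation is that $\cd\ch(\ct)$ depends only on the triangulated structure of $\ct$: its basis $\{[X]\mid X\in\ct\}$ and its structure constants are defined entirely in terms of $\Hom$-spaces, shifts, and cones. Hence the given triangle equivalence $F\colon\cd^b(\ca)\to\cd^b(\cb)$ induces a $\Q$-algebra isomorphism $\cd\ch(\ca)\cong\cd\ch(\cb)$ via $[X]\mapsto[F(X)]$. The twisting factor $\prod_i|\Hom(X,Y[i])|^{(-1)^i}$ in the twisted product is itself derived-invariant, so the same map upgrades to the twisted isomorphism $\cd\ch_{tw}(\ca)\cong\cd\ch_{tw}(\cb)$.

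For the torus factor, one first checks that $K_0(\cc^b_{ac}(\ca))\cong\bigoplus_{m\in\Z}K_0(\ca)$, with the $m$-th summand generated by the classes $[K_{A,m}]$; this follows from the decomposition lemma for acyclic complexes together with the fact that a short exact sequence of acyclic complexes induces short exact sequences on the images. By the Euler-form computation for acyclic complexes, the form between generators of the torus depends only on $\langle\widehat A,\widehat B\rangle_\ca$ and on the index difference $m-n$. Because $F$ induces a group isomorphism $K_0(\ca)\cong K_0(\cb)$ preserving the Euler form (the latter being determined by the triangulated Homs), extending it shift-by-shift yields the desired $\T_{ac}^{tw}(\ca)\cong\T_{ac}^{tw}(\cb)$. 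Tensoring the two isomorphisms gives the conclusion; the subtle point, and in some sense the only real obstacle, is to confirm that the tensor-product identification of the preceding corollary is the naive one, so that isomorphisms of the factors automatically assemble into an isomorphism of the whole algebra. Centrality of the torus inside $\cm\ch_{tw}$ is exactly what makes this work.
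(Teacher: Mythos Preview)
Your proposal is correct and follows essentially the same approach as the paper: both invoke the preceding corollary to reduce to constructing separate isomorphisms on the derived Hall factor and on the torus factor, then tensor them together. The paper's argument is terser---it simply asserts that $F$ induces the isomorphism $K_0(\ca)\cong K_0(\cb)$ and hence $\T_{ac}^{tw}(\ca)\cong\T_{ac}^{tw}(\cb)$ via $K_{\alpha,n}\mapsto K_{F(\alpha),n}$, without discussing Euler-form preservation (which, in the twisted setting, is unnecessary since $\T_{ac}^{tw}$ is just the group algebra of $K_0(\cc^b_{ac}(\ca))$)---but your extra care about the Euler form and the centrality of the torus does no harm.
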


\begin{proof}
By Corollary \ref{corollary iso DH_{tw}(A) and MH(A)in section 1}, it is sufficient to prove that there exists an isomorphism
$$F_*: \cd\ch_{tw}(\ca)\otimes_{\mathbb{Q}}\mathbb{T}_{ac}^{tw}(\ca)\rightarrow \cd\ch_{tw}(\cb)\otimes_{\mathbb{Q}}\mathbb{T}_{ac}^{tw}(\cb).$$

Clearly the equivalence $F$ can induce an isomorphism between the Grothendieck groups $K_0(\ca)$ and $K_0(\cb)$, still denoted by $F$. So we can get an isomorphism $F_*$ between $\mathbb{T}_{ac}^{tw}(\ca)$ and $\mathbb{T}_{ac}^{tw}(\cb)$ by setting $F_*(K_{\alpha, n})=K_{F(\alpha), n}$ for any $\alpha\in K_0(\ca)$ and $n\in \mathbb{Z}$.

Because the twisted derived Hall algebras is invariant under derived equivalences, we denote by $F_*$ the induced isomorphism between the twisted derived Hall algebras. For any object $X\otimes T$ in $\cd\ch_{tw}(\ca)\otimes_{\mathbb{Q}}\mathbb{T}_{ac}^{tw}(\ca)$, set $$F_*(X\otimes T)=F_*(X)\otimes F_*(T).$$
Clearly  this is an isomorphism of algebras. This finishes the proof.
\end{proof}

\begin{remark}
If $\ca$ has enough projectives, then as in \cite{LuP} one can prove that the modified Ringel-Hall algebra $\cm\ch(\ca)$ is isomorphic to the Bridgeland's Hall algebra in \cite{Br}  from  bounded complexes of projectives, which is also the semi-derived Hall algebra defined by Gorsky in \cite{Gor16} from the Frobenius category consisting of bounded complexes of projectives. In this case our results above are essentially same as those gotten by Gorsky in \cite{Gor16}.
\end{remark}

\end{document}